\numberwithin{equation}{section}
\renewcommand{\H}{\mathrm{H}}
\renewcommand{\SS}{\mathbb{S}}
\newcommand{\A}{\mathrm{A}}
\newcommand{\B}{\mathrm{B}}
\newcommand{\E}{\mathrm{E}}
\newcommand{\I}{\mathrm{I}}
\newcommand{\K}{\mathrm{K}}
\renewcommand{\L}{\mathrm{L}}
\newcommand{\CC}{\mathbb{C}}
\newcommand{\FF}{\mathbb{F}}
\newcommand{\QQ}{\mathbb{Q}}
\newcommand{\ZZ}{\mathbb{Z}}
\newcommand{\Hom}{\mathrm{Hom}}
\newcommand{\BO}{\mathrm{BO}}
\newcommand{\SL}{\mathrm{SL}}
\newcommand{\SO}{\mathrm{SO}}
\newcommand{\J}{\mathrm{J}}
\newcommand{\cokerJ}{\mathrm{coker}(\J)}
\newcommand{\JO}{\mathrm{JO}}
\newcommand{\JSO}{\mathrm{JSO}}
\newcommand{\JSpin}{\mathrm{JSpin}}
\newcommand{\KO}{\mathrm{KO}}
\newcommand{\KU}{\mathrm{KU}}
\newcommand{\BU}{\mathrm{BU}}
\newcommand{\MU}{\mathrm{MU}}
\newcommand{\RU}{\mathrm{RU}}
\newtheorem{theorem}{Theorem}[section]
\newtheorem{conjecture}[theorem]{Conjecture}
\newtheorem{proposition}[theorem]{Proposition}
\newtheorem{corollary}[theorem]{Corollary}
\theoremstyle{definition}
\newtheorem{definition}[theorem]{Definition}
\newtheorem{example}[theorem]{Example}
\newtheorem{remark}[theorem]{Remark}
\newtheorem{digression}[theorem]{Digression}
\numberwithin{equation}{section}
\title{\bf The chromatic filtration\\ of the Burnside category} 
\author{Markus Szymik}
\date{March 2012}
\begin{document}

\maketitle

\begin{abstract}
\noindent
The Segal map connects the Burnside category of finite groups to the stable homotopy category of their classifying spaces. The chromatic filtrations on the latter can be used to define filtrations on the former. We prove a related conjecture of Ravenel's in some cases, and present counterexamples to the general statement.
\end{abstract}

\thispagestyle{empty}


\section*{Introduction}

The Segal map connects the Burnside category of finite groups to the stable homotopy category of their classifying spaces. The purpose of this writing is to study the interplay with the chromatic filtration of the stable homotopy category. For the purposes of the introduction, we will focus on the more familiar Burnside rings.

Let~$G$ be a finite group, and let~$\A(G)$ be its Burnside ring of isomorphism classes of virtual finite~$G$-sets. This is an algebraic invariant of~$G$ which plays a fundamental role for its representation theory. See~\cite{Solomon},~\cite{Dress},~\cite{tomDieck}, and~\cite{Bouc} for some basic information on these rings. 

The Burnside rings are related to topology by the Segal map, which is a natural map
\begin{equation*}
  \A(G)\longrightarrow[\,\Sigma^\infty_+(\B G)\,,\,\SS\,]
\end{equation*}
from the Burnside ring to the ring of stable homotopy classes of maps from the suspension spectrum of the classifying space~$\B G$ of~$G$ to the sphere spectrum~$\SS$. The subscript~$+$ indicates that a disjoint base point has been added. By the completion theorem for stable cohomotopy, formerly known as the \hbox{Segal} conjecture, this map is iso\-morphic to the completion
\begin{equation*}
	\A(G)\to\widehat \A(G)
\end{equation*}
at the augmentation ideal of the Burnside ring. See~\cite{Carlsson} and the references therein. 

The target of the Segal map is universal in the following sense: if~$R$ is any ring spectrum, composition with the
unit~\hbox{$\SS\rightarrow R$} induces a ring map
\begin{equation*}
  \A(G)\longrightarrow[\,\Sigma^\infty_+(\B G)\,,\,R\,].
\end{equation*}
One would like to think that these are maybe easier to understand than the universal case. For example, if~$R$ is complex oriented as an algebra over the complex Thom spectrum~$\MU$, these maps and in particular their targets have been investigated by Hopkins, Kuhn, and Ravenel in~\cite{Ravenel:morava}, \cite{Kuhn:modp}, \cite{Kuhn:morava}, \cite{Kuhn:survey}, \cite{Hopkins},~\cite{HKR1}, and~\cite{HKR2}. 

In this paper, we turn our attention to ring spectra~$R$ which are in some sense closer to the sphere spectrum~$\SS$ itself:
if~$E$ is any spectrum, the Bousfield~$E$-localization~$\L_E\SS$ is a ring spectrum, and there results a ring map
\begin{equation*}
  \A(G)\longrightarrow[\,\Sigma^\infty_+(\B G)\,,\,\L_E\SS\,].
\end{equation*}
Of particular interest in this context are the localizations~\hbox{$\L_n=\L_{\E(n)}$} with respect to the Johnson-Wilson spectra~$\E(n)$. The spectrum~$\E(0)=\H\QQ$ is the rational Eilenberg-Mac Lane spectrum. But, if~$n\geqslant1$, these spectra not only depend on the integer~\hbox{$n\geqslant0$} but also on a prime~$p$, which is commonly omitted from the notation. In the case~$n=1$, the spectrum~$\E(1)$ is one of~\hbox{$p-1$} summands~(which are all stably equivalent up to suspensions) of~$p$-local complex~K-theory. The localizations~\hbox{$\L_n$} filter the stable homotopy category into layers which show periodic phenomena; this is called the chromatic filtration. See~\cite{Ravenel:Localization} and Section~\ref{sec:chromatic} for more background information. 

As Morava has put it, ``Now that Segal's conjectures have been proven, it would be very interesting to understand their relation to the chromatic filtration''~\cite{Morava}.

Ravenel, in~\cite{Ravenel:Conjecture}, has posed the problem to determine the kernels of the localization maps
\begin{equation*}
	\widehat \A(G)\cong[\,\Sigma^\infty_+(\B G)\,,\,\SS\,]\longrightarrow[\,\Sigma^\infty_+(\B G)\,,\,\L_n\SS\,],
\end{equation*}
and he has conjectured that these contain all virtual finite~$G$-sets~$X$ such that the fixed point set~$X^U$ has virtual cardinality~$0$ for all subgroups~$U$ of~$G$ which are generated by at most~$n$ elements.

The main aim of this note is to show that, while this conjecture does hold in the rational case~\hbox{$n=0$}, it fails already for the next case~$n=1$ because of some particular features at the prime~2. Before we do so, we put this into the more general perspective of the Burnside category of finite groups. Section~\ref{sec:BurnsideSegal} reviews this and the Segal map in a way that fits the purposes of the following text. In the following Sections~\ref{sec:chromatic} and~\ref{sec:Ravenel}, we will introduce the chromatic filtrations and the Ravenel filtration of the Burnside category, respectively. This allows us to phrase the conjecture slightly more conceptually. Along the way, we will prove a couple of useful properties of these filtrations, and we will also see that (a generalization of) the rational case~$n=0$ of the conjecture holds. Section~\ref{sec:n=1} contains our positive results for the next case~$n=1$ of Ravenel's conjecture, which holds if one considers odd primes only. We also mention that we might just as well have used localization with respect to the first Morava~K-theory in this case. The final Section~\ref{sec:counterexample} presents examples and counterexamples for the case~\hbox{$n=1$} at the prime~\hbox{$p=2$}, based on earlier work of Laitinen~\cite{Laitinen} and Feshbach~\cite{Feshbach}. In particular, we will discuss the Klein group~$V_4$, the alternating group~$A_4$, and the dihedral group~$D_8$.

The existence of bad groups~\cite{Kriz},~\cite{Kriz+Lee} prevents us from extending our discussion to higher chromatic levels~$n\geqslant 2$, see Remark~\ref{rem:higher_n}.


\section{The Burnside category and the Segal map}\label{sec:BurnsideSegal}

A finite group~$G$ has a classifying space~$\B G$, and~$\Sigma^\infty_+(\B G)$ will denote its corresponding suspension spectrum, with a disjoint base point added. A homomorphism~\hbox{$G\to H$} of groups induces a map~$\B G\to \B H$ between their classifying spaces, and also a stable map~\hbox{$\Sigma^\infty_+(\B G)\to\Sigma^\infty_+(\B H)$}. This construction extends over the Burnside category, which is defined as follows, see~\cite[Section 4]{Adams:Segal},~\cite[\S 9]{Adams+Gunawardena+Miller} and also~\cite{May}.

\subsection{The Burnside category}

The objects of the Burnside category are the finite groups. The set~$\A(G,H)$ of morphisms from~$G$ to~$H$ is the Grothendieck group on the abelian monoid of isomorphisms classes of finite~$(G,H)$-bisets which are~$H$-free. 

\begin{example}
	If~\hbox{$H=e$} is the trivial group, them the abelian group~$\A(G,e)$ is the~(underlying abelian group of) the Burnside ring~$\A(G)$ of~$G$. 
\end{example}

Finite~$(G,H)$-bisets can be identified with finite sets with an action of the product group~\hbox{$G\times H$}, because the actions of~$G$ and~$H$ are required to commute: the formula is~\hbox{$(g,h)s=gsh^{-1}$}. This embeds the abelian group~$\A(G,H)$ into the Burnside ring~$\A(G\times H)$ of the product group. If~$U$ is a subgroup of~$G\times H$, then the map which sends a set~$S$ with an action of~$G\times H$ to the set~$S^U$ of its~$U$-fixed points extends to give~a homomorphism
\begin{equation*}
	\Phi^U\colon\A(G\times H)\longrightarrow\ZZ,
\end{equation*}
the~$U$-mark homomorphism, and its restriction to~$\A(G,H)$ will be denoted by the same symbol. The~$e$-mark~$\Phi^e$ is usually referred to as the augmentation.

\begin{remark}\label{rem:vanishing}
The~$U$-mark~$\Phi^U$ vanishes on~$\A(G,H)$ as soon as~$U\cap H\not=e$, because~$H$ acts freely on the classes in~$\A(G,H)$.
\end{remark}

Composition in the Burnside category is induced by sending a~$(G,H)$-biset~$S$ and an~$(H,I)$-biset~$T$ to~$S\times_H T$. The identity in~$\A(G,G)$ is~$G$ with the actions of~$G$ by multiplication from the left and from the right. The category of finite groups is embedded into the Burnside category: if~$\phi\colon G\to H$ is a homomorphism of groups, then~$H$ is an~$H$-free finite~$(G,H)$-biset, where~$G$ acts on~$H$ from the left via~$\phi$, and where~$H$ acts on it via the right multiplication. Similarly, but contravariantly, if~$\theta\colon H\to G$ is injective, then~$G$ is an~$H$-free finite~$(G,H)$-biset, where~$G$ acts via left multiplication, and~$H$ acts freely from the right via~$\theta$. Morphisms of these two types generate the Burnside category.

\subsection{The Segal maps} 

The Segal maps
\begin{equation}\label{eq:Segal}
	\alpha\colon\A(G,H)\longrightarrow[\,\Sigma^\infty_+(\B G)\,,\,\Sigma^\infty_+(\B H)\,]
\end{equation}
define an additive functor from the Burnside category to the stable homotopy category. For example, if~$S$ is an~$H$-free finite~$(G,H)$-biset which represents a class in~$\A(G,H)$, this induces an~$H$-cover of the form~\hbox{$\E G\times_GS\to \E G\times_G(S/H)$} and this in turn is classified by a map~\hbox{$\E G\times_G(S/H)\to \B H$}. Stabilization and precomposition with the stable transfer associated with the cover~\hbox{$\E G\times_G(S/H)\to \B G$} gives the value 
\begin{equation*}
	\Sigma^\infty_+(\B G)\longrightarrow
	\Sigma^\infty_+(\E G\times_G(S/H))\longrightarrow
	\Sigma^\infty_+(\B H)
\end{equation*}
of~$\alpha$ on~$S$.

As a consequence of Carlsson's affirmative solution~\cite{Carlsson} to the Segal conjecture, the Segal map induces an isomorphism between the completion of~$\A(G,H)$ with respect to the augmentation ideal of the Burnside ring~$\A(G)$, and the group~$[\,\Sigma^\infty_+(\B G)\,,\,\Sigma^\infty_+(\B H)\,]$ of stable maps between their classifying spaces. In fact, this statement can be generalized to give a description of the function spectrum (and to compact Lie groups~$H$). See~\cite{Lewis+May+McClure},~\cite{May}, and~\cite{May+Snaith+Zelewski}. However, we will restrict our attention to the algebra in the following.

\subsection{A linear analogue of the Burnside category}

There is a linear analogue of the Burnside category, where finite sets are replaced by finite-dimensional complex vector spaces throughout. For lack of reference, we will have to spell this out in detail.

The objects of this linear category are again the finite groups. The set~$\RU(G,H)$ of morphisms from~$G$ to~$H$ is the Grothendieck group on the abelian monoid of isomorphisms classes of finite-dimensional~$(\CC G,\CC H)$-bimodules which are free over~$\CC H$. For example, the abelian group~$\RU(G,e)$ is the~(underlying abelian group of) the complex representation ring~$\RU(G)$ of~$G$. On the other hand, finite-dimensional~$(\CC G,\CC H)$-bimodules can be identified with finite-dimensional complex vector spaces with a complex linear action of the product group~\hbox{$G\times H$}, because the actions of~$G$ and~$H$ are required to commute: the formula is again~\hbox{$(g,h)v=gvh^{-1}$}. This embeds the abelian group~$\RU(G,H)$ into the representation ring~$\RU(G\times H)$ of the product group. In contrast to the case of the Burnside rings, representation rings behave well with respect to products:~$\RU(G\times H)\cong\RU(G)\otimes\RU(H)$. Composition in this category is induced by the sending a~$(\CC G,\CC H)$-bimodule~$V$ and an~$(\CC H,\CC I)$-bimodule~$W$ to the tensor product~$V\otimes_{\CC H}W$. The identity in~$\RU(G,G)$ is the biregular representation~$\CC G$. This category receives a linearization functor from the Burnside category, which is the identity on objects, and sends a~$(G,H)$-biset~$S$ to the permutation~$(\CC G,\CC H)$-bimodule~$\CC S$.

\begin{remark}\label{rem:RUiso}
As has already been mentioned, in contrast to the case of the Burnside rings, representation rings behave well with respect to products. In the same vein, the homomorphism
\begin{equation*}
	\RU(G,H)\longrightarrow\RU(G),\,V\longmapsto V\otimes_{\CC H}\CC,
\end{equation*}
induced by the unique homomorphism~$H\to e$, is always an isomorphism. The inverse is given by
\begin{equation*}
	\RU(G)\longrightarrow\RU(G,H),\,W\longmapsto W\otimes_\CC\CC H,
\end{equation*}
where~$G$ acts on~$W\otimes_\CC\CC H$ only on the left hand side. To see that this is the inverse, use that there is an isomorphism~$V\otimes_{\CC H}\CC\otimes_\CC\CC H\cong V$ if~$V$ is free over~$\CC H$. Because of this fact, this linear category consists essentially only of representation rings.
\end{remark}


\section{The chromatic filtration of the Burnside category}\label{sec:chromatic}

\subsection{The chromatic filtration of the stable homotopy category}

Before we define the chromatic filtration of the Burnside category, let us first briefly review the chromatic filtration~\cite{Ravenel:Localization} on the stable homotopy category. For each prime number~$p$ and integer~$n\geqslant0$, let~$\E(n)$ be the Johnson-Wilson spectrum. The coefficients are given by~$\pi_*\E(n)\cong\ZZ_{(p)}[v_1,\dots,v_{n-1},v_n^{\pm1}]$, where~$v_n$ has degree~\hbox{$2(p^n-1)$}. Bousfield localization~\cite{Bousfield:Localization} of a spectrum~$X$ with respect to~$\E(n)$ will be denoted by~$X\to\L_nX$. There are natural transformations~$\L_nX\to\L_{n-1}X$ for all spectra~$X$, so that these functors determine a tower
\begin{equation*}
\dots\longrightarrow\L_nX\longrightarrow\dots\longrightarrow\L_1X\longrightarrow\L_0X,
\end{equation*}
which defines the chromatic filtration (at~$p$) of~$X$. We will denote by~$\K(n)$ the~$n$-th Morava K-theory spectrum. The coefficients are~$\pi_*\K(n)\cong\FF_{p}[v_n^{\pm1}]$. Using these, Bousfield localization~$\L_{n-1}X$ with respect to~$\E(n-1)$ is equivalent to Bousfield localization with respect to~$\K(0)\vee\dots\vee\K(n-1)$, and~$\L_{n-1}X$ is~$\K(n)$-acyclic. This implies that the difference between~$\L_nX$ and~$\L_{n-1}X$ is described by the chromatic Hasse square
\begin{center}
  \mbox{ 
    \xymatrix{
      \L_n X\ar[r]\ar[d] & \L_{\K(n)}X\ar[d] \\
      \L_{n-1}X\ar[r]&\L_{n-1}\L_{\K(n)}X,
    } 
  }
\end{center}
which is a pullback square for all~$X$. The localizations~$\L_{\K(n)}X$ are therefore the basic pieces from which one tries to assemble~$X$.

\subsection{The chromatic filtration of the Burnside category}

The chromatic filtrations on the stable homotopy category restricts to the chromatic filtrations on the Burnside category: Let~$G$ and~$H$ be finite groups, and let~$\A(G,H)$ be the group of morphisms~$G\to H$ in the Burnside category. For an integer~\hbox{$n\geqslant0$}, and a prime~$p$, let~$\I_{n,p}(G, H)$ be the kernel of the composition
\begin{equation*}
	\A(G,H)\overset{\alpha}{\longrightarrow}[\,\Sigma^\infty_+(\B G)\,,\,\Sigma^\infty_+(\B H)\,]\overset{\L_n}{\longrightarrow}[\,\L_n\Sigma^\infty_+(\B G)\,,\,\L_n\Sigma^\infty_+(\B H)\,]
\end{equation*}
of the Segal map~$\alpha$ with the Bousfield localization~$\L_n$ at the prime~$p$. Note that a localization~$\L_E$ induces an isomorphism
\begin{equation*}
[\,\L_E\Sigma^\infty_+(\B G)\,,\,\L_E\Sigma^\infty_+(\B H)\,]\cong[\,\Sigma^\infty_+(\B G)\,,\,\L_E\Sigma^\infty_+(\B H)\,],
\end{equation*}
so that there is never a need to localize the source.

Since there are natural transformations~\hbox{$\L_n\to \L_{n-1}$}, these abelian groups form a decreasing sequence
\begin{equation}
  \A(G,H)\supseteq \I_{0,p}(G,H)\supseteq \I_{1,p}(G,H)\supseteq \I_{2,p}(G,H)\supseteq\dots\,.
\end{equation}
In order to free ourselves from the grip of the prime~$p$, we may also consider the subgroups
\begin{equation*}
	\I_n(G,H)=\bigcap_p\,\I_{n,p}(G,H)
\end{equation*}
of~$\A(G,H)$ which no longer depend on~$p$. They also form a decreasing sequence
\begin{equation}\label{eq:chromatic_filtration}
  \A(G,H)\supseteq \I_0(G,H)\supseteq \I_1(G,H)\supseteq \I_2(G,H)\supseteq\dots\,.
\end{equation}

\begin{definition}
	The filtration~\eqref{eq:chromatic_filtration} on the morphism groups~$\A(G,H)$ of the Burnside category will be referred to as the {\it chromatic filtration}. 
\end{definition}

If~$H=e$ is the trivial group, so that~$\A(G,e)=\A(G)$ is the Burnside ring of~$G$, then we will simply write~$\I_n(G)=\I_n(G,e)$, of course.

Before we discuss the case~$n=0$, let us first document a useful property of the chromatic filtration.

\begin{proposition}\label{prop:I-restriction}
	For each~$n\geqslant0$, the subgroups~$\I_n(G,H)\subseteq\A(G,H)$ form a two-sided ideal in the Burnside category. 
\end{proposition}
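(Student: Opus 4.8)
The plan is to reduce the claim to a statement about the stable homotopy category, where the analogous fact is essentially a formal consequence of the functoriality of Bousfield localization. Concretely, a subgroup $\I \subseteq \A(G,H)$ being a two-sided ideal in the Burnside category means: for every finite group $F$ and every $\xi \in \A(F,G)$, precomposition sends $\I_n(G,H)$ into $\I_n(F,H)$, and for every finite group $K$ and every $\eta \in \A(H,K)$, postcomposition sends $\I_n(G,H)$ into $\I_n(G,K)$. Since $\I_n = \bigcap_p \I_{n,p}$, it suffices to prove this for each $\I_{n,p}$ separately, so I fix a prime $p$ and work with $\L_n = \L_{\E(n)}$.

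The key observation is that the Segal map $\alpha$ is an additive functor from the Burnside category to the stable homotopy category, and $\L_n$ is a functor on the latter. So the composite $x \mapsto \L_n(\alpha(x))$ is functorial in the sense that $\L_n(\alpha(x \circ \xi)) = \L_n(\alpha(x)) \circ \L_n(\alpha(\xi))$ as maps $\L_n \Sigma^\infty_+(\B F) \to \L_n \Sigma^\infty_+(\B H)$, and similarly on the other side. Hence if $x \in \I_{n,p}(G,H)$, meaning $\L_n(\alpha(x)) = 0$, then $\L_n(\alpha(x \circ \xi)) = \L_n(\alpha(x)) \circ \L_n(\alpha(\xi)) = 0$, so $x \circ \xi \in \I_{n,p}(F,H)$; the postcomposition case is identical. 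Taking the intersection over all $p$ gives that $\I_n$ is a two-sided ideal. The one point that deserves care is matching up the Hom-groups: the definition of $\I_{n,p}$ uses the group $[\,\L_n\Sigma^\infty_+(\B G)\,,\,\L_n\Sigma^\infty_+(\B H)\,]$, and one should note (as the excerpt already does) that this is canonically isomorphic to $[\,\Sigma^\infty_+(\B G)\,,\,\L_n\Sigma^\infty_+(\B H)\,]$, under which composition is still composition; this is just the universal property of localization and poses no real difficulty.

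I do not expect a genuine obstacle here — the statement is formal once one unwinds the definitions. If anything, the only thing to be careful about is to verify that the vanishing locus of an additive functor composed with another functor is closed under pre- and postcomposition, which is immediate, and to state clearly what "two-sided ideal in the Burnside category" means, since the Burnside category is not a single ring but a category whose Hom-sets are abelian groups. I would phrase the proof in that language: a collection of subgroups $\I(G,H) \subseteq \A(G,H)$, one for each pair of objects, is a two-sided ideal if it is closed under composition with arbitrary morphisms on either side. Then the whole proof is: $\I_{n,p}$ is the preimage of $0$ under a functor, preimages of the zero ideal under functors are two-sided ideals, and intersections of two-sided ideals are two-sided ideals.
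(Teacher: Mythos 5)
Your argument is exactly the paper's: the paper's one-line proof invokes the functoriality of the Segal map and of localization, observing that pre- and post-composition with zero maps gives zero maps. You have simply unwound this in more detail, which is fine and correct.
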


This means: if~$R$,~$S$, and~$T$ are composable morphisms in the Burnside category such that~$RST$ is defined, and~$S$ lies in~$\I_n$, then so do~$RS$ and~$ST$.

\begin{proof}
	This is clear from the functoriality both of the Segal map and of the localization: pre- and post-composition with zero-maps leads to zero-maps.
\end{proof}

\begin{corollary}\label{cor:I-restriction}
All the chromatic subgroups~$\I_n(G)$ of the Burnside rings~$\A(G)$ are ideals, and these are functorial in the group: if~\hbox{$\phi\colon G\to H$} is a morphism of groups, then the morphism~\hbox{$\phi^*\colon \A(H)\to \A(G)$} of rings which is defined by restriction along~$\phi$ respects the filtration: it maps the ideal~$\I_n(H)$ to the ideal~$\I_n(G)$.
\end{corollary}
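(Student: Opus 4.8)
The plan is to deduce Corollary~\ref{cor:I-restriction} from Proposition~\ref{prop:I-restriction} by recognizing the restriction map $\phi^*$ as a composition operation in the Burnside category. First I would recall that the category of finite groups embeds into the Burnside category in two ways described in Section~\ref{sec:BurnsideSegal}: a homomorphism $\phi\colon G\to H$ contributes a covariant morphism, and when $\phi$ is injective it also contributes a contravariant one. The ring map $\phi^*\colon\A(H)\to\A(G)$ given by restriction of $H$-sets along $\phi$ is, under the identification $\A(G)=\A(G,e)$ and $\A(H)=\A(H,e)$, exactly post-composition is not quite right — rather it is \emph{pre}-composition with the contravariant morphism in $\A(G,H)$ determined by $\phi$ (i.e. with the $H$-free $(G,H)$-biset $H$ on which $G$ acts via $\phi$). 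So for $x\in\A(H)=\A(H,e)$ we have $\phi^*(x)=x\circ[\phi]$ where $[\phi]\in\A(G,H)$, and this composite lands in $\A(G,e)=\A(G)$.

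With that identification in hand, the corollary is immediate: if $x\in\I_n(H)=\I_n(H,e)$, then by Proposition~\ref{prop:I-restriction} the composite $x\circ[\phi]\in\A(G,e)$ again lies in $\I_n(G,e)=\I_n(G)$, because $\I_n$ is a two-sided ideal in the Burnside category and in particular is closed under pre-composition by arbitrary morphisms. The fact that $\I_n(G)$ is itself an ideal of the ring $\A(G)$ is the special case $G=H$, $\phi=\mathrm{id}$, combined again with Proposition~\ref{prop:I-restriction}: the ring structure on $\A(G)=\A(G,e)$ is compatible with composition in the Burnside category (multiplication by $a\in\A(G)$ corresponds to composing with the morphism of $\A(G,G)$ that $a$ induces), so the two-sided ideal property specializes to the ordinary ideal property. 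Functoriality $\phi^*(\I_n(H))\subseteq\I_n(G)$ is then exactly what the previous paragraph established.

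The main thing to get right — and the only genuine point of substance — is the bookkeeping in the first paragraph: verifying that $\phi^*$ really is the pre-composition-with-$[\phi]$ operation, with the correct variance, so that Proposition~\ref{prop:I-restriction} applies with $S=x$ and (say) $T=[\phi]$. This is a matter of unwinding the definition of composition in the Burnside category (the biset product $S\times_H T$) and checking that for the permutation biset $[\phi]$ this product computes restriction of $G$-, rather than induction of $H$-, sets. Once that identification is pinned down, there is nothing left to prove; I would keep the write-up short, essentially one sentence of identification followed by an appeal to the proposition.
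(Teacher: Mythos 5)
Your proof is correct and follows the same route the paper implicitly intends: Corollary~\ref{cor:I-restriction} is deduced from Proposition~\ref{prop:I-restriction} by realizing both ring multiplication on~$\A(G)=\A(G,e)$ and the restriction map~$\phi^*$ as Burnside-category compositions, after which the two-sided-ideal property does all the work. One small terminological slip: the $(G,H)$-biset $H$ with $G$ acting via~$\phi$ is what the paper calls the \emph{covariant} embedding of~$\phi$ (defined for all homomorphisms~$\phi$), not the contravariant one (which requires an injection $H\hookrightarrow G$ and uses $G$ as the biset); the functor $\A(-,e)$ it induces is contravariant in the group, but the morphism itself is the covariant one --- worth keeping straight since the paper's two embeddings are genuinely different constructions.
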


\subsection{The rational case}

As a reassuring exercise, let us check that we completely understand the zeroth case~\hbox{$n=0$}.

\begin{example}\label{ex:I_0}
	We have~$\I_{0,p}(G,H)=\I_0(G,H)$ for all primes~$p$: the spectrum~$\L_0\SS$ is the rational Eilenberg-MacLane spectrum~$\H\QQ$, and therefore independent of~$p$. This also shows that there is an isomorphism
\begin{equation*}
  [\,\L_0\Sigma^\infty_+(\B G)\,,\,\L_0\Sigma^\infty_+(\B H)\,]=\Hom_\QQ(\,\H\QQ_*(\B G)\,,\,\H\QQ_*(\B H)\,)\cong\QQ.
\end{equation*}
The description of the Segal map in Section~\ref{sec:BurnsideSegal} shows that, under this identification, the induced homomorphism~\hbox{$\A(G,H)\to\QQ$} sends a virtual finite~$H$-free~$(G,H)$-biset~$S$ to the virtual cardinality of~$S/H$. Therefore, the subgroup~$\I_0(G,H)$ of the group~$\A(G,H)$ consist of those virtual finite~$H$-free~$(G,H)$-bisets~$S$ in~$\A(G,H)$ such that~$S/H$ has virtual cardinality~$0$. In particular, the ideal~$\I_0(G)$ 
consist of those virtual finite~$G$-sets~$S$ in~$\A(G)$ such that~$S$ has virtual cardinality~$0$: this is the augmentation ideal of the Burnside ring~$\A(G)$.
\end{example}

We will see later that already in next case~$n=1$ the chromatic ideal is no longer as easy to describe.


\section{The Ravenel filtration of the Burnside category}\label{sec:Ravenel}

\subsection{The Ravenel filtration}

Let again~$G$ and~$H$ be finite groups, and let~$\A(G,H)$ be the abelian group of morphisms~\hbox{$G\to H$} in the Burnside category as before. For an integer~\hbox{$n\geqslant0$}, let~$\J_n(G,H)$ be the subgroup of~$\A(G,H)$ which consists of the virtual finite~$H$-free~$(G,H)$-bisets~$S$ such that the~$U$-marks~\hbox{$\Phi^U(S/H)$} of the virtual finite~$G$-set~$S/H$ vanish for all subgroups~$U\subseteq G$ which are generated by at most~$n$ elements. 

\begin{remark}
	If~$H=e$ is the trivial group, so that we are dealing with the Burnside rings, the ideals~\hbox{$\J_n(G)=\J_n(G,e)$} are the ones implicit in~\cite{Ravenel:Conjecture}, and the definition given above can rephrased to say that~$\J_n(G,H)$ is the preimage of~$\J_n(G)$ under the homomorphism~\hbox{$\A(G,H)\to\A(G)$} induced by the unique homomorphism~$H\to e$ of groups. From several possible choices of generalization of this special case to the bivariant situation, the one presented above has been chosen, because it fits our purposes best. 
\end{remark}
 
For varying~$n$, the ideals~$\J_n(G,H)$ evidently form a decreasing sequence
\begin{equation}\label{eq:Ravenel_filtration}
  \A(G,H)\supseteq \J_0(G,H)\supseteq \J_1(G,H)\supseteq \J_2(G,H)\supseteq\dots,
\end{equation}
and by the finiteness of the set of subgroups of~$G$, there is always an integer~$n$, depending on~$G$, such that~\hbox{$\J_n(G,H)=0$}. In fact, a rather crude upper bound for~$n$ is the order of~$G$~(minus~$1$). This shows
that this filtration has the Hausdorff property.
\begin{equation}\label{eq_J_n_intersection_is_null}
	\bigcap_n\,\J_n(G,H)=0
\end{equation}

\begin{definition}
	The filtration~\eqref{eq:Ravenel_filtration} on the group~$\A(G,H)$ will be referred to as the {\it Ravenel filtration}. 
\end{definition}

Note that the Ravenel filtration does not depend on the choice of a prime~$p$. 

\subsection{Conjectures} 

With the notions introduced so far, Ravenel's conjecture in~\cite{Ravenel:Conjecture} can be reformulated a follows.

\begin{conjecture}\label{conj:Ravenel}
	For all finite groups~$G$, and all integers~$n\geqslant0$, the Ravenel ideals~$\J_n(G)$ of the Burnside rings~$\A(G)$ lie in the chromatic ideals~$\I_n(G)$.
\end{conjecture}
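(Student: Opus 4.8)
The plan is to combine the functoriality of the Segal map with the fact that the mark homomorphisms are jointly injective on a Burnside ring. Fix a finite group $G$, an integer $n\ge0$, and a class $X\in\J_n(G)$; I want to show that $\L_n\alpha(X)=0$ in $[\Sigma^\infty_+\B G,\L_n\SS]$, and since $\I_n(G)=\bigcap_p\I_{n,p}(G)$ it is enough to work at one prime $p$ at a time. For a subgroup $U\le G$ the inclusion, viewed as a homomorphism of groups, is a morphism $U\to G$ in the Burnside category, and the recipe of Section~\ref{sec:BurnsideSegal} shows that $\alpha$ sends it to the map $\Sigma^\infty_+\B U\to\Sigma^\infty_+\B G$ induced by $\B U\to\B G$; hence $\alpha(\mathrm{res}^G_U X)$ is the restriction of $\alpha(X)$ along $\B U\to\B G$. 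Now assume $U$ is an abelian $p$-subgroup of rank at most $n$. Every subgroup $V\le U$ is then abelian of rank at most $n$, hence generated by at most $n$ elements, so $\Phi^V(\mathrm{res}^G_U X)=\Phi^V(X)=0$; since the marks are injective on $\A(U)$, this forces $\mathrm{res}^G_U X=0$, and therefore $\L_{n,p}\alpha(X)$ restricts to $0$ on $\B U$ for every such $U$.

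The conjecture is thus reduced to the statement that the restriction map
\[
  [\Sigma^\infty_+\B G,\L_{n,p}\SS]\longrightarrow\prod_U[\Sigma^\infty_+\B U,\L_{n,p}\SS],
\]
with $U$ ranging over conjugacy classes of abelian $p$-subgroups of rank $\le n$, is injective. I would attack this through the chromatic Hasse square: $\L_{n,p}\SS$ is assembled from the monochromatic layers $\L_{\K(i)}\SS$ for $0\le i\le n$, so it suffices to prove the analogue for each $\L_{\K(i)}\SS$. For the Morava $E$-theories $E_i$ this injectivity is precisely the content of Hopkins--Kuhn--Ravenel character theory: $E_i^0(\B G)$ embeds into the ring of class functions on conjugacy classes of homomorphisms $\ZZ_p^i\to G$, and every such homomorphism factors through an abelian $p$-subgroup of rank at most $i$, so $E_i^0(\B G)\to\prod_U E_i^0(\B U)$ is injective. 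The case $n=0$ then follows at once: $\L_0\SS=\H\QQ$ and $\H^0(\B G;\QQ)=\QQ=\H^0(\B U;\QQ)$ because classifying spaces are connected, which also recovers Example~\ref{ex:I_0}.

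For $n=1$ one must pass from $E_1$-theory (essentially $p$-complete complex $K$-theory) to the $\K(1)$-local sphere. At an odd prime $\L_{\K(1)}\SS$ stays close enough to $K$-theory --- through the Adams summand of $p$-local $K$-theory and the image-of-$J$ spectrum --- that the detection statement survives; here one also uses the Atiyah--Segal identification of $\KU^0(\B G)$ with a completed representation ring together with the injectivity of $\RU(G)\to\prod_C\RU(C)$ over cyclic subgroups. I would expect this to yield the conjecture for $n=0$ unconditionally and for $n=1$ at all odd primes, and the same ideas ought to let one replace $\L_1$ by the first Morava $K$-theory in this range.

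The main obstacle is exactly the descent from the Morava $E$- and $K$-theories, where detection on abelian $p$-subgroups of small rank always holds, to the $\K(i)$-local spheres, where it need not. At the prime $2$ and height $1$ this fails because of the $2$-primary anomalies of the image of $J$ --- the classes $\eta$, $\eta^2$ and the non-splitting of the Adams summand --- and, in view of the work of Laitinen and Feshbach, the groups $V_4$, $A_4$ and $D_8$ should furnish explicit classes in $\J_1$ that are not annihilated by $\L_1$. At heights $n\ge2$ the descent fails because of the bad groups of Kriz and Kriz--Lee, whose Morava $K$-theoretic behaviour is not controlled by abelian subgroups. So the argument above is meant to settle the conjecture for $n=0$ and for $n=1$ at odd primes, while the prime $2$ (and every $n\ge2$) is precisely where it breaks down --- and, presumably, where the counterexamples announced in the introduction are to be found.
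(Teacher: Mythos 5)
This conjecture is not a theorem of the paper --- the paper refutes it for $n=1$ at the prime~$2$ (Section~\ref{sec:counterexample}), and establishes only the fragments $n=0$ (Theorem~\ref{thm:n=0}) and $n=1$ at odd primes (Theorem~\ref{thm:E(1)}) --- and you correctly identify exactly this landscape, including where and why the argument must break down. For the two true cases, however, your route is genuinely different from the paper's. You reduce to detection on abelian $p$-subgroups of rank $\leqslant n$: first you show that an element of $\J_n(G)$ restricts to zero in $\A(U)$ for every such $U$ (this part is correct, and uses the ideal property of~$\J_n$ together with injectivity of marks on abelian groups), and then you want $[\Sigma^\infty_+\B G,\L_{n,p}\SS]\to\prod_U[\Sigma^\infty_+\B U,\L_{n,p}\SS]$ to be injective. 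The paper avoids restriction to subgroups altogether: for $n=1$ it identifies $\J_1(G)$ with the kernel of the linearization $\A(G)\to\RU(G)$ (Proposition~\ref{prop:J_1}) and then shows the comparison map $[\Sigma^\infty_+\B G,\L_1\SS]\to[\Sigma^\infty_+\B G,\L_1\KU]$ is injective at odd primes (Proposition~\ref{prop:injective}), combining the Adams--Baird--Ravenel fibration sequences for $\L_1\SS$ with Atiyah's completion theorem. Detection in $K$-theory, not detection on abelian subgroups, is the working ingredient.

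The gap in your plan lies precisely in the detection statement it leans on. HKR character theory does give injectivity of $E_n^0(\B G)\to\prod_A E_n^0(\B A)$ over abelian $p$-subgroups, but descent from $E_n$ to the $\K(n)$- or $\E(n)$-local sphere is not automatic, and passing through the chromatic Hasse square yields a Mayer--Vietoris exact sequence, not directly the injectivity of a restriction map. Even in the case $n=1$ at odd primes you would need a completed statement to the effect that $\widehat\RU(G)_{(p)}\to\prod_C\widehat\RU(C)_{(p)}$ is injective over cyclic $p$-subgroups and then would still need to descend from $\L_1\KU$ to $\L_1\SS$; both steps require argument you have not supplied. The paper's Proposition~\ref{prop:injective} is both weaker and more directly provable: it places the kernel of the comparison inside $\KU^{-1}(\B G)_{(p)}$, which simply vanishes by Atiyah's theorem. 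If you want to pursue the detection-on-abelian-subgroups picture, you would do better to prove that injectivity first and deduce detection as a corollary, rather than treating the detection statement as your starting point.
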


For the purposes of later reference, we also formulate the following stronger statement. 

\begin{conjecture}\label{conj:strong}
For all finite groups~$G$ and~$H$, and integers~$n\geqslant0$, the Ravenel subgroups~$\J_n(G,H)$ of~$\A(G,H)$ lie inside the chromatic groups~$\I_n(G,H)$. 
\end{conjecture}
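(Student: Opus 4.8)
The goal is to show $\J_n(G,H)\subseteq\I_n(G,H)$ for all finite groups $G$, $H$ and all $n\geqslant0$. The first reduction I would make is to pass from the bivariant situation to the single-variable one. Recall that $\J_n(G,H)$ is by definition the preimage of $\J_n(G)$ under the map $\pi_*\colon\A(G,H)\to\A(G)$ induced by $H\to e$, where we identify $S\mapsto S/H$. So it suffices to understand $\J_n(G)\subseteq\I_n(G)$ together with a compatibility statement linking $\I_n(G,H)$ and $\I_n(G)$. For the latter: an $H$-free $(G,H)$-biset $S$ gives a stable map $\Sigma^\infty_+\B G\to\Sigma^\infty_+\B H$, and composing with $\Sigma^\infty_+\B H\to\Sigma^\infty_+\B e=\SS$ recovers (up to the identifications already in the paper) $\alpha(S/H)\colon\Sigma^\infty_+\B G\to\SS$. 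Since $\L_n$ is a functor, if $\L_n\alpha(S/H)=0$ then $\L_n\alpha(S)$ composed with the (localized) projection to $\L_n\SS$ is zero. But that projection need not be monic, so this does \emph{not} immediately give $\L_n\alpha(S)=0$. I would therefore instead try to prove the bivariant statement head-on using the structure of $\Sigma^\infty_+\B H$, or — more promisingly — reduce $H$ to the trivial group by a transfer argument: if $\phi\colon G\to H$ is any homomorphism then the induced biset generates $\A(G,H)$ together with transfers from subgroups, and one can hope to bootstrap from $H=e$ using that $\I_n$ is a two-sided ideal (Proposition~\ref{prop:I-restriction}). Concretely, every class in $\A(G,H)$ is (after the usual manipulations) a sum of classes of the form $G\times_K \psi$ where $K\subseteq G\times H$ projects isomorphically, modulo classes supported on subgroups; tracking marks and using Remark~\ref{rem:vanishing} one reduces the membership question to $\A(K)$ for various $K$, hence ultimately to the single-variable conjecture for subquotients of $G\times H$.

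\textbf{The single-variable core.} So the heart is: a virtual $G$-set $S$ with $\Phi^U(S)=0$ for every $U\subseteq G$ generated by $\leqslant n$ elements must satisfy $\L_n\alpha(S)=0$. Here I would use the tom Dieck splitting: $\Sigma^\infty_+\B G$ is stably a wedge indexed by conjugacy classes of subgroups $K\subseteq G$, with $K$-summand (after $G$-completion / on the relevant homotopy) built from $\Sigma^\infty_+(\B W_G K)$ where $W_G K=N_G K/K$, and the mark $\Phi^K$ controls the component of $\alpha(S)$ landing in the $K$-summand. Thus if $\Phi^K(S)=0$ for all $K$ generated by $\leqslant n$ elements, $\alpha(S)$ is a sum of contributions from subgroups $K$ requiring $\geqslant n+1$ generators. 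The key point then becomes a statement about $\L_n$ applied to the $K$-summand of $\Sigma^\infty_+\B G$ when $K$ needs many generators — but that is not obviously zero either; what saves us is rather the following classical input, which I would invoke: \emph{$\L_n\Sigma^\infty_+\B G$ depends, via the Hopkins--Kuhn--Ravenel / tom Dieck-type decompositions and the transfer, only on the subgroups generated by at most $n$ elements}. More precisely, the composite $\Sigma^\infty_+\B G\to\SS\to\L_n\SS$ factors (after appropriate completion and using that the $\L_n$-localization of $\Sigma^\infty_+\B K$ for $K$ elementary-abelian of rank $>n$ contributes nothing new to maps \emph{into} $\L_n\SS$, by the Kuhn--Hopkins--Ravenel vanishing results for Morava $E$-theory and the chromatic Hasse square) through the sum of transfers from subgroups $U$ with at most $n$ generators. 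Granting that, $\L_n\alpha(S)$ is determined by the numbers $\Phi^U(S)$ over such $U$, all of which vanish by hypothesis, whence $\L_n\alpha(S)=0$.

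\textbf{Where the work concentrates.} The rational case $n=0$ was already done in Example~\ref{ex:I_0}: $\L_0\SS=\H\QQ$ and $\alpha(S)$ is detected by $\Phi^e(S)$ alone. For general $n$ the real obstacle is making precise and correctly attributing the statement ``maps into $\L_n\SS$ from $\Sigma^\infty_+\B G$ are built from subgroups with $\leqslant n$ generators.'' I would organize this via the $\K(i)$-localizations for $i\leqslant n$ and the iterated chromatic Hasse squares: it suffices to show, for each $i\leqslant n$, that $[\Sigma^\infty_+\B G,\L_{\K(i)}\SS]$ is detected by marks at subgroups generated by $\leqslant i\leqslant n$ elements, which is exactly the kind of statement governed by the HKR character theory and the fact that $\K(i)^*\B A$ for $A$ elementary-abelian of rank $i$ is the universal receptacle — combined with the observation that transfers from rank-$(i{+}1)$ subgroups act trivially on the relevant Morava-theoretic functionals. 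Then a downward induction using the pullback squares assembles $\L_n\SS$ from the $\L_{\K(i)}\SS$, $i\leqslant n$, and transports the detection statement. The subtle points I expect to fight with are (i) the passage from homotopy with $E$-theory coefficients back to the integral/uncompleted statement about $[\,\Sigma^\infty_+\B G,\L_n\SS\,]$, for which I would lean on the Segal conjecture (Carlsson) identifying the source with $\widehat\A(G)$ and on the fact that $\J_n(G)$ is already an ideal in a ring whose completion is the source; and (ii) checking that the reduction from $\I_n(G,H)$ to $\I_n(K)$ for subquotients $K$ is compatible with all these localizations — this is where Proposition~\ref{prop:I-restriction} (the ideal property) and Corollary~\ref{cor:I-restriction} (functoriality) do the bookkeeping. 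If (i) proves too delicate uniformly in $n$, I would retreat to proving the statement one chromatic height at a time, which is all that is needed for the applications in Section~\ref{sec:n=1}.
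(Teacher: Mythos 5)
The statement you are trying to prove is a conjecture that the paper goes on to refute. Section~\ref{sec:counterexample} exhibits the element \hbox{$g=\sum_{C}[V_4/C]-([V_4/e]+2)$} of~$\J_1(V_4)=\J_1(V_4,e)$ whose image in~$[\,\Sigma^\infty_+(\B V_4)\,,\,\L_1\SS\,]$ is essential at the prime~$2$, so that~$\J_1(V_4)\not\subseteq\I_1(V_4)$. No proof of Conjecture~\ref{conj:strong} can be correct, and the fatal gap in yours is precisely the step you yourself flag as ``the real obstacle'' and then assert as classical input: that~$[\,\Sigma^\infty_+(\B G)\,,\,\L_n\SS\,]$ is detected by marks at subgroups generated by at most~$n$ elements. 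Hopkins--Kuhn--Ravenel character theory does not deliver this. It computes~$\E_n^0(\B G)$ only after base change to a rationalized coefficient ring, hence detects elements only modulo torsion, and the composite from~$[\,\Sigma^\infty_+(\B G)\,,\,\L_n\SS\,]$ to~$\E_n$-cohomology is not injective in general. The injectivity you need is exactly Proposition~\ref{prop:injective}, which the paper proves only for~$n=1$ and~$p$ odd, using the Adams--Baird--Ravenel fibration through~$\KU_{(p)}$ and the vanishing of~$\KU^{-1}(\B G)$. At~$p=2$ the fibration runs through~$\KO_{(2)}$ instead, $\KO^{-1}_{(2)}$ is already nonzero for the trivial group, and the resulting image-of-$\J$ torsion in~$\pi_0\L_1\SS$ is precisely what detects~$g$: the map~$\B V_4\to\SL_1(\L_1\SS)\simeq\JO$ associated with~$1+g$ factors through~$\SO\to\JO$, which is injective in mod~$2$ homology.

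For~$n\geqslant2$ the situation is worse still: the bad groups of Kriz and Kriz--Lee show that the generalized character map on~$\E_n^0(\B G)$ is itself not injective, so even the character-theoretic half of your downward induction over the chromatic Hasse squares breaks (Remark~\ref{rem:higher_n}). What survives of your outline is what the paper actually proves: the case~$n=0$ (where~$\L_0\SS=\H\QQ$ and only~$\Phi^e$ matters, Example~\ref{ex:I_0}), and the case~$n=1$, $H=e$, at odd primes (Theorem~\ref{thm:E(1)}), where~$\J_1(G)$ is identified with the kernel of linearization~$\A(G)\to\RU(G)$ and then killed in~$\L_1\KU$. If you want a true statement to aim at, restrict to odd primes or to the~$p$-local ideals~$\I_{n,p}$ with~$p$ odd; the integral statement as posed is false.
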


Since already the original Conjecture~\ref{conj:Ravenel} turns out to be indefensible in general, so is Conjecture~\ref{conj:strong}. However, if~$G=e$ is the trivial group, then the strong form of Conjecture~\ref{conj:strong} is true for all~$H$ and~$n$, since~$\A(e,H)\cong\ZZ$ and~\hbox{$\I_0(e,H)=0=\J_0(e,H)$} in this case. This suggests that the case~$H=e$ is the gist of the matter here, and we will later focus on it.

\subsection{The ideal property} 

Before we examine the first few steps of the Ravenel filtration, let us document the property of the Ravenel filtration which corresponds to Proposition~\ref{prop:I-restriction} for the chromatic filtration.

\begin{proposition}
	For each~$n\geqslant0$, the subgroups~$\J_n(G,H)\subseteq\A(G,H)$ form a two-sided ideal in the Burnside category. 
\end{proposition}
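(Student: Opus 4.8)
The plan is to reduce the statement to a concrete assertion about $U$-marks and then verify it by the composition formula for bisets. Concretely, suppose $R$, $S$, $T$ are composable morphisms in the Burnside category with $RST$ defined and $S \in \J_n$; we must show $RS \in \J_n$ and $ST \in \J_n$. Since $\J_n(G,H)$ was \emph{defined} as the preimage of $\J_n(G)$ under the ``quotient by the right factor'' map $\A(G,H)\to\A(G)$, $V\mapsto V/H$, it suffices to understand how this quotient interacts with composition on each side. So the first step is to record, for an $(F,G)$-biset $R$ and a $(G,H)$-biset $S$, a formula relating $(R\times_G S)/H$ to $R$ and $S/G$, and dually for postcomposition $(S\times_H T)/I$ in terms of $S/H$ and $T$.

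For postcomposition this is immediate: if $T$ is an $(H,I)$-biset that is $I$-free, then $(S\times_H T)/I \cong S\times_H (T/I) \cong (S/H)\times_{?}\dots$ — more precisely, since $I$ acts only through $T$, quotienting by $I$ commutes with $\times_H$, and $(S\times_H T)/I \cong S/H$ whenever $T/I$ is a one-point $H$-set up to the right $H$-action; in general $(S\times_H T)/I \cong (S/H)$ smashed with the transfer data of $T$. The cleanest route is to observe that $V\mapsto V/I$ on $\A(G,I)$ is, by Remark~\ref{rem:vanishing}-style reasoning and the linearization discussion, compatible with composition by $T$ on the right in the sense that it carries composition-with-$T$ to composition-with-$(T/I)$ in the Burnside category of $G$-sets over a point; hence if $S/H \in \J_n(G)$ then $(S\times_H T)/I$ lies in the ideal of $\A(G)$ generated by $S/H$, which is contained in $\J_n(G)$ once we know $\J_n(G)$ is an ideal of $\A(G)$. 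That last fact is the $H=e$ case of the very proposition being proved, so to avoid circularity I would prove the $H=e$ case first, directly: $\J_n(G)\subseteq\A(G)$ is an ideal because $\Phi^U$ is a ring homomorphism, so $\Phi^U(S\cdot X)=\Phi^U(S)\Phi^U(X)=0$ whenever $\Phi^U(S)=0$, for every $U$ generated by at most $n$ elements.

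For precomposition the key point is the \emph{double coset / Mackey} behaviour of fixed points. Given an $(F,G)$-biset $R$ and a virtual $G$-set $Y=S/H$, one has $(R\times_G Y)/\text{(nothing)} = R\times_G Y$ as an $F$-set, and I need $\Phi^U(R\times_G Y)=0$ for all $U\subseteq F$ generated by $\leqslant n$ elements, knowing $\Phi^V(Y)=0$ for all $V\subseteq G$ generated by $\leqslant n$ elements. The standard computation expresses $(R\times_G Y)^U$ as a disjoint union, over $U$-orbits of $R/G$ (equivalently over a double-coset-type indexing set), of fixed-point sets $Y^{V}$ where each $V\subseteq G$ is the image of (a subgroup of) the stabilizer of the corresponding point of $R/G$ — crucially, $V$ is a quotient of a subgroup of $U$, hence is itself generated by at most $n$ elements. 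Therefore each term $\Phi^{V}(Y)$ vanishes, and summing gives $\Phi^U(R\times_G Y)=0$, i.e. $RS\in\J_n(F,H)$.

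The main obstacle is writing down the fixed-point formula for $R\times_G Y$ cleanly enough to see the claim ``$V$ is generated by $\leqslant n$ elements'' — the point is that $V$ arises as a homomorphic image of a subgroup of $U$ (via the action map into $G$ coming from a chosen point of $R$), and epimorphic images of $n$-generated groups are $n$-generated, so the generation bound is preserved. Once that is in hand, the rest is bookkeeping: reduce the bivariant statement to the $\A(G)$-ideal statement via the defining map $V\mapsto V/H$, handle precomposition by the fixed-point formula above, handle postcomposition by compatibility of $V\mapsto V/I$ with composition, and invoke that $\J_n(G)$ is an ideal of $\A(G)$, which follows from $\Phi^U$ being a ring homomorphism. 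I would also remark that this mirrors exactly the proof of Proposition~\ref{prop:I-restriction}, except that there functoriality of the Segal map and of localization did the work for free, whereas here the ideal property must be checked by hand on the level of marks.
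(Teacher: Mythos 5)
Your plan — reduce to the map $V\mapsto V/H$ into $\A(G)$, split into pre- and post-composition, and verify vanishing of marks — matches the overall shape of the paper's proof, which in addition reduces to the two generating types of morphisms (group homomorphisms and subgroup transfers). For precomposition your argument is essentially right but misstated in a way that matters: you say each $V\subseteq G$ appearing in the decomposition of $(R\times_G Y)^U$ is ``a quotient of a subgroup of $U$'' and conclude that $V$ is generated by at most $n$ elements. That inference is invalid as phrased, since subgroups of $n$-generated finite groups need not be $n$-generated (for instance $S_{2m}$ is $2$-generated but contains an elementary abelian $2$-group of rank $m$). What actually happens is stronger and is what you need: for a $U$-fixed orbit $rG\in(R/G)^U$, the freeness of the right $G$-action on $R$ produces a homomorphism $g_r\colon U\to G$ defined on \emph{all} of $U$, characterized by $ur=rg_r(u)$, and the relevant $V=g_r(U)$ is a quotient of $U$ itself, hence $n$-generated. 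With that correction the precomposition half is sound.

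The postcomposition half contains the real gap. The pivotal assertion — that $(S\times_H T)/I$ lies in the ideal of $\A(G)$ generated by $S/H$ — is left unargued and is false in general: $(S\times_H T)/I\cong S\times_H(T/I)$ is an integer combination of the $G$-sets $S/K$ for various $K\leqslant H$, and $S/K$ is neither a multiple of $S/H$ in $\A(G)$ nor subject to $\Phi^U(S/H)=0\Rightarrow\Phi^U(S/K)=0$ once $S$ is a \emph{virtual} biset. Indeed this does not look like a gap that can be filled: take $G=H=C_2$ and
\begin{equation*}
S=\bigl[(G\times H)/(G\times e)\bigr]-\bigl[(G\times H)/\Delta\bigr]\in\A(G,H),
\end{equation*}
with $\Delta$ the graph of the isomorphism $G\cong H$. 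Both transitive pieces have orbit space a point, so $S/H=0$ and $S\in\J_n(G,H)$ for every $n$; yet composing with the transfer $T=[H]\in\A(H,e)$ (associated to $e\leqslant H$) gives $S\times_H T=2[G/G]-[G/e]\in\A(G)$, whose $C_2$-mark is $2\neq 0$, so $S\times_H T\notin\J_1(G)$. The paper's own treatment of this case — deducing that $(S/I)^U$ is ``virtually empty'' from the existence of a $G$-map $S/I\to S/H$ with virtually empty target — makes exactly the same jump from honest $G$-sets to virtual ones, so you should be suspicious of the postcomposition direction of the statement itself rather than trying to push your argument through.
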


\begin{proof}
	Let~$R\colon F\to G$,~$S\colon G\to H$, and~$T\colon H\to I$ be composable morphisms in the Burnside category, and assume that~$S$ lies in~$\J_n(G,H)$. Then we have to show that the compositons~$R\times_GS$ and~$S\times_HT$ are in~$\J_n$ as well.
	
	Let us start with~$S\times_HT$. There are two cases to consider. On the one hand, if~$T$ is given by a group morphism~$H\to I$, then~$T=I$, and therefore
\begin{equation*}
	S\times_HT/I=S\times_HI/I\cong S/H.
\end{equation*} 
Of course, this implies that
\begin{equation*}
	\Phi^U(S\times_HT/I)=\Phi^U(S/H)=0
\end{equation*} 
for all subgroups~$U$ of~$G$ which are generated by at most~$n$ elements. On the other hand, if~$T$ is given by a subgroup~$I\leqslant H$, then~$T=H$, and therefore
\begin{equation*}
	S\times_HT/I=S\times_HH/I\cong S/I. 
\end{equation*} 
There is a~$G$-equivariant map~$S/I\to S/H$, and this induces a map between the~$U$-fixed points, for all subgroups~$U$ of~$G$ which are generated by at most~$n$ elements. Because the latter are virtually empty by assumption, so are the former. 
	
Let us finally deal with the other case~$R\times_GS$ as well. There are again two cases to consider. On the one hand, if~$R$ is given by a morphism~$F\to G$ of groups, then~$R=G$, and therefore
\begin{equation*}	
	R\times_GS=G\times_GS\cong S. 
\end{equation*} 	
If~$U$ is a subgroup of~$F$ which is generated by at most~$n$ elements, so is the image~$\overline U$ of~$U$ in~$G$. This implies
\begin{equation*} 
	\Phi^U(R\times_GS)=\Phi^{\overline U}(S)=0
\end{equation*} 
by assumption. On the other hand, if~$R$ is given by a subgroup~$G\leqslant F$, then~$R=F$, and therefore~$R\times_GS=F\times_GS$ it the induced~$F$-set. If we write~$\overline S=S/H$, it remains to be shown that~$\Phi^U(\overline S)=0$ for all subgroups~$U\leqslant G$ which are generated by at most~$n$ elements implies~\hbox{$\Phi^V(F\times_G\overline S)=0$} for all subgroups~$V\leqslant F$ which are generated by at most~$n$ elements. Recall~$G\leqslant F$. Therefore, if~$V\not\leqslant G$, then~$\Phi^V(F\times_G\overline S)=0$ anyway, since~$F\times_G\overline S$ is a linear combination of classes represented by~$F\times_GG/H\cong F/H$ with~\hbox{$H\leqslant G$}, and these~$F$-sets have no~$V$-fixed points if~$V\not\leqslant G$. If, on the contrary,~$V\leqslant G$, then~$\Phi^V(F\times_G\overline S)$ is determined by the restriction of the induced~$F$-set~$F\times_G\overline S$ back to~$G$, and this is just a multiple of~$\overline S$. (The multiplicity is the index of~$G$ in~$F$.) Therefore~$\Phi^V(\overline S)=0$ implies~\hbox{$\Phi^V(F\times_G\overline S)=0$} also in this case.
\end{proof}

\subsection{The cases \texorpdfstring{$n=0$}{n=0} and \texorpdfstring{$n=1$}{n=1}}

Let us now examine the first two steps of the Ravenel filtration.

\begin{example}\label{ex:J_0}
	Since in every group~$G$ there exists a unique subgroup generated by no elements, the trivial subgroup~$e$, the subgroup~$\J_0(G,H)$ of~$\A(G,H)$ consist of those virtual finite~$H$-free~$(G,H)$-bisets~$S$ in~$\A(G,H)$ such that~$S/H$ has virtual cardinality~$0$. In particular, the ideal~$\J_0(G)$ is the augmentation ideal of the Burnside ring~$\A(G)$. 
\end{example}

\begin{theorem}\label{thm:n=0}
	Ravenel's Conjecture~\ref{conj:Ravenel}, even in the strong form of Conjecture~\ref{conj:strong}, holds for~$n=0$.
\end{theorem}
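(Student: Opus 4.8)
The plan is to prove something slightly stronger than claimed, namely the equality $\I_0(G,H)=\J_0(G,H)$, from which the inclusion $\J_0(G,H)\subseteq\I_0(G,H)$ — and hence Conjecture~\ref{conj:Ravenel} for $n=0$ upon setting $H=e$ — follows immediately. The point is that both sides have, in effect, already been computed in the excerpt, so the proof reduces to a comparison of two explicit descriptions.

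First I would invoke Example~\ref{ex:I_0}. Since $\L_0\SS=\H\QQ$ is the rational Eilenberg--Mac Lane spectrum, the target of the localized Segal map is
\begin{equation*}
[\,\L_0\Sigma^\infty_+(\B G)\,,\,\L_0\Sigma^\infty_+(\B H)\,]\cong\Hom_\QQ(\H\QQ_*(\B G),\H\QQ_*(\B H))\cong\QQ,
\end{equation*}
using that $\B H$ is rationally trivial for a finite group. Under this identification the localized Segal map carries a class $S\in\A(G,H)$ to the virtual cardinality of $S/H$, so $\I_0(G,H)$ is precisely the subgroup of those $S$ with $S/H$ of virtual cardinality zero.

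Next I would invoke Example~\ref{ex:J_0}. The only subgroup of $G$ generated by at most $0$ elements is the trivial subgroup $e$, so the defining condition for $\J_0(G,H)$ collapses to the single equation $\Phi^e(S/H)=0$; since $\Phi^e(S/H)$ is the (virtual) cardinality of $(S/H)^e=S/H$, this again says exactly that $S/H$ has virtual cardinality zero. Comparing the two descriptions yields $\I_0(G,H)=\J_0(G,H)$, which gives the theorem.

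I do not expect a genuine obstacle here: all of the real content sits in the rational computation underlying Example~\ref{ex:I_0} (identifying $\L_0$ with rational homology, checking that $\H\QQ_*(\B G)=\QQ$, and reading off the value of the Segal map as the virtual cardinality of $S/H$), and that has already been carried out. The only thing to be careful about in the comparison is that the phrase ``virtual cardinality of $S/H$'' appearing in the two Examples refers to literally the same invariant, namely the $e$-mark $\Phi^e$ (the augmentation) applied to $S/H$; once that is noted, the two subgroups are equal on the nose.
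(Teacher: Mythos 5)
Your proposal is correct and matches the paper's proof exactly: the paper also deduces the theorem immediately by comparing the descriptions of $\I_0(G,H)$ and $\J_0(G,H)$ from Examples~\ref{ex:I_0} and~\ref{ex:J_0}, both of which identify the subgroup as those $S$ with $S/H$ of virtual cardinality zero. You have simply spelled out the comparison in more detail than the one-line argument given there.
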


\begin{proof}
	This follows immediately from our Examples~\ref{ex:I_0} and~\ref{ex:J_0} above.
\end{proof}

The next case~$n=1$ of the Ravenel filtration can be understood in terms of the linearization functor.

\begin{proposition}\label{prop:J_1}
  For all finite groups~$G$ and~$H$, the subgroup~$\J_1(G,H)$ of~$\A(G,H)$ is equal to the kernel of the linearization
  map~\hbox{$\A(G,H)\rightarrow \RU(G,H)$}.
\end{proposition}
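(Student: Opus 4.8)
The plan is to reduce everything to the case $H=e$ via Remark~\ref{rem:RUiso} and the observation in the preceding remark that $\J_n(G,H)$ is the preimage of $\J_n(G)$ under the augmentation map $\A(G,H)\to\A(G)$. Indeed, the linearization functor is compatible with the augmentations $H\to e$, so there is a commutative square whose horizontal maps are linearization $\A(G,-)\to\RU(G,-)$ and whose vertical maps are induced by $H\to e$; the right-hand vertical map $\RU(G,H)\to\RU(G)$ is an isomorphism by Remark~\ref{rem:RUiso}. Hence the kernel of $\A(G,H)\to\RU(G,H)$ is exactly the preimage under $\A(G,H)\to\A(G)$ of the kernel of $\A(G)\to\RU(G)$. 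Since $\J_1(G,H)$ is by definition the preimage of $\J_1(G)$, it suffices to prove the statement for $H=e$: namely that $\J_1(G)$ is the kernel of the linearization map $\A(G)\to\RU(G)$ sending a $G$-set to its permutation representation.

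For the case $H=e$, the heart of the matter is the classical relation between marks of a $G$-set and characters of its permutation representation. First I would recall that the permutation character of $\CC[S]$ evaluated at an element $g\in G$ equals $|S^{\langle g\rangle}|$, the number of points fixed by $g$, equivalently $\Phi^{\langle g\rangle}(S)$. Extending $\ZZ$-linearly, a virtual $G$-set $S$ lies in the kernel of linearization if and only if its virtual permutation character vanishes, i.e.\ $\Phi^{\langle g\rangle}(S)=0$ for every $g\in G$. On the other hand, $\J_1(G)$ consists of the $S$ with $\Phi^U(S)=0$ for every cyclic subgroup $U$ (the subgroups generated by at most one element are precisely the cyclic ones, including the trivial one). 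Since every cyclic subgroup is of the form $\langle g\rangle$ and conversely, the two conditions are visibly the same set of equations, so $\J_1(G)=\ker(\A(G)\to\RU(G))$.

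The one point requiring care — and the main (mild) obstacle — is the passage from ``the virtual permutation character vanishes'' to ``the virtual permutation representation vanishes in $\RU(G)$''. For an honest representation this is automatic over $\CC$ since characters determine the representation, but here we are comparing a virtual $G$-set $S=S_+-S_-$ with the virtual representation $\CC[S_+]-\CC[S_-]$, and we must know that this virtual representation is zero in $\RU(G)$ precisely when its character is identically zero; this is exactly the statement that the character map $\RU(G)\to\mathrm{Cl}(G,\CC)$ is injective, which is standard. With that in hand the argument is complete. I would also note in passing that restricting the vanishing of marks to cyclic subgroups generated by a \emph{single} element (rather than all subgroups) is precisely what makes $\J_1$, and not some smaller ideal, appear — this is the combinatorial shadow of the fact that linearization only ``sees'' cyclic subgroups, consistent with Artin's induction theorem.
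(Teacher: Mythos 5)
Your proof is correct and takes essentially the same approach as the paper: both reduce the bivariant case to~$H=e$ via the commutative square comparing linearization with the maps induced by~$H\to e$ and the isomorphism~$\RU(G,H)\cong\RU(G)$ from Remark~\ref{rem:RUiso}, and both handle~$H=e$ by identifying the permutation character of~$\CC S$ at~$g$ with the mark~$\Phi^{\langle g\rangle}(S)$ and invoking injectivity of the character map. The only difference is cosmetic: you present the reduction first and the~$H=e$ case second, while the paper does the reverse.
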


\begin{proof}
	Let us first check this for the case~$H=e$, when we only have to deal with the Burnside and representation rings. (This case is well known, of course.) If~$S$ is a virtual~$G$-set, then the character~$\chi_V$ of the associated permutation representation~$V=\CC S$ is given by~$\chi_V(g)=\Phi^C(S)$, where~$C=\langle g\rangle$ is the subgroup of~$G$ which is generated by~$g$. By definition, the subgroup~$C$ is cyclic, and if~$g$ runs through the elements of~$G$, the subgroups~$C=\langle g\rangle$ run through the set of all cyclic subgroups of~$G$. Since a virtual representation is zero if and only if its character vanishes, the kernel of the linearization map~$\A(G)\to\RU(G)$ consists of those~$S$ with~$\Phi^C(S)=0$ for all cyclic subgroups~$C$ of~$G$, which is precisely~$\J_1(G)$.
	
	The general case, where~$H\not=e$, can be reduced to what has just been proven, by means of the following commutative diagram, which reflects the naturality of the linearization.
\begin{center}
  \mbox{ 
    \xymatrix{
      \A(G,H)\ar[r]\ar[d] & \RU(G,H)\ar[d] \\
      \A(G)\ar[r]&\RU(G)
    } 
  }
\end{center}
By definition, an element~$S$ in~$\A(G,H)$ lies in~$\J_1(G,H)$ if and only if~$S/H$ lies in~$\J_1(G)$. By what has been shown above, this is the case if and only if the associated virtual permutation representation is zero. This argument already proves that the kernel of the linearization is contained in~$\J_1(G,H)$. The converse inclusion would follow if the downward arrow on the right hand side were injective. But, this is even an isomorphism by Remark~\ref{rem:RUiso}.	
\end{proof}


\section{Positive results for the first chromatic layer}\label{sec:n=1}

In this section, we will see that Ravenel's Conjecture~\ref{conj:Ravenel} is almost true for~$n=1$: it holds if we consider odd primes only, see Theorem~\ref{thm:E(1)}. This result concerns the original form of the conjecture only, see Remark~\ref{rem:bivariant} for a hint towards the bivariant statement. We will also comment on the effect of changing the target from the~$\E(1)$-local sphere to the~$\K(1)$-local sphere.

\subsection{Fibration sequences}

Adams, Baird, and Ravenel have shown that there is a fibration sequence
\begin{equation}\label{eq:fibration_sequence1}
	\L_1\SS\longrightarrow F\longrightarrow\Sigma^{-1}\mathrm{H}\mathbb{Q},
\end{equation}
where~$F$ is the fibre in another fibration sequence
\begin{equation}\label{eq:fibration_sequence2}
	F\longrightarrow \KU_{(p)}\longrightarrow \KU_{(p)}.
\end{equation}
The first sequence~\eqref{eq:fibration_sequence1} is~\cite[Theorem~4.3]{Bousfield:Localization}, and holds for all primes; the second one~\eqref{eq:fibration_sequence2} is~\cite[Corollary~4.4]{Bousfield:Localization}, and holds for odd primes only. The spectrum~$\KU_{(p)}$ is the~$p$-localization of~$\KU$, which is also a model for~$\L_1\KU$.

\begin{proposition}\label{prop:injective}
	For all odd primes, the map
	\begin{equation}\label{eq:injective}
		[\,\Sigma^\infty_+(\B G)\,,\,\L_1\SS\,]\longrightarrow[\,\Sigma^\infty_+(\B G)\,,\,\L_1\KU\,]
	\end{equation}
	induced by the unit~$\SS\to\KU$ is injective.
\end{proposition}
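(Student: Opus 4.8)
The plan is to exploit the two fibration sequences \eqref{eq:fibration_sequence1} and \eqref{eq:fibration_sequence2} together with the fact that the relevant error terms are rational Eilenberg--Mac Lane spectra, on which $\Sigma^\infty_+(\B G)$ has controlled behaviour. First I would apply the functor $[\,\Sigma^\infty_+(\B G)\,,\,-\,]$ to the fibration sequence \eqref{eq:fibration_sequence2}, obtaining a long exact sequence which, since $F$ sits between two copies of $\KU_{(p)}\simeq\L_1\KU$, identifies the map \eqref{eq:injective} with a piece of the associated long exact sequence; more precisely, the map $\L_1\SS\to F$ of \eqref{eq:fibration_sequence1} followed by $F\to\KU_{(p)}$ gives the unit, so it suffices to analyse the two sequences in turn.

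The key step is then to control the cohomology of $\B G$ with coefficients in the rational spectra appearing. From \eqref{eq:fibration_sequence1} we get a long exact sequence relating $[\,\Sigma^\infty_+(\B G)\,,\,\L_1\SS\,]$, $[\,\Sigma^\infty_+(\B G)\,,\,F\,]$, and $[\,\Sigma^\infty_+(\B G)\,,\,\Sigma^{-1}\H\QQ\,]$. Since $\widetilde{\H}^*(\B G;\QQ)=0$ for a finite group $G$, the two outer groups $[\,\Sigma^\infty_+(\B G)\,,\,\Sigma^{-1}\H\QQ\,]$ and $[\,\Sigma^\infty_+(\B G)\,,\,\H\QQ\,]$ reduce to $\H^1(\mathrm{pt};\QQ)=0$ and $\H^0(\mathrm{pt};\QQ)=\QQ$ respectively, so the map $[\,\Sigma^\infty_+(\B G)\,,\,\L_1\SS\,]\to[\,\Sigma^\infty_+(\B G)\,,\,F\,]$ is injective (with cokernel sitting inside $\QQ$). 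Similarly, from \eqref{eq:fibration_sequence2} the map $F\to\KU_{(p)}$ is, on $\B G$-cohomology, part of a long exact sequence whose connecting and preceding terms involve $[\,\Sigma^\infty_+(\B G)\,,\,\Sigma^{\pm1}\KU_{(p)}\,]=\widetilde{\KU}^{\mp1}_{(p)}(\B G)\oplus \KU^{\mp1}_{(p)}(\mathrm{pt})$. One then observes that the relevant self-map of $\KU_{(p)}$ in \eqref{eq:fibration_sequence2} (an Adams operation minus the identity) induces a map on $\KU^*_{(p)}(\B G)$ whose kernel on the reduced part is controlled: the reduced $K$-theory $\widetilde{\KU}^0(\B G)$ is the augmentation ideal $\widehat{I}$ of $\RU(G)$, finite over $\ZZ_{(p)}$, and torsion-by-a-rational-summand considerations force the contribution to $[\,\Sigma^\infty_+(\B G)\,,\,F\,]$ that maps to zero in $\L_1\KU$ to vanish.

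The main obstacle I anticipate is the bookkeeping in the second fibration sequence: unlike $\H\QQ$, the spectrum $\KU_{(p)}$ is not rational, so one cannot simply kill the error terms. One must instead identify the self-map in \eqref{eq:fibration_sequence2} (it is $\psi^q-1$ for a suitable topological generator $q$ of $\ZZ_p^\times$, up to the $(p-1)$-fold splitting), show that on $\KU^{-1}_{(p)}(\B G)$ — which is a torsion group for finite $G$ — this operation is close to an isomorphism, and conclude that the image of $[\,\Sigma^\infty_+(\B G)\,,\,F\,]$ in $[\,\Sigma^\infty_+(\B G)\,,\,\KU_{(p)}\,]$ contains no nonzero class coming from $\L_1\SS$ via \eqref{eq:fibration_sequence1}. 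Combining the two injectivity statements — first $[\,\Sigma^\infty_+(\B G)\,,\,\L_1\SS\,]\hookrightarrow[\,\Sigma^\infty_+(\B G)\,,\,F\,]$, then the injectivity of the restriction of $F\to\KU_{(p)}$ to the image — yields that the composite \eqref{eq:injective} is injective. A cleaner alternative, if available, is to diagram-chase the map of fibration sequences directly: the cofibre of $\L_1\SS\to\L_1\KU$ is built from $\Sigma^{-1}\H\QQ$ and a shift of $\KU_{(p)}$, and one checks that $[\,\Sigma^\infty_+(\B G)\,,\,-\,]$ of this cofibre, in degree $0$ and $-1$, has no class hit by a nonzero element of $[\,\Sigma^\infty_+(\B G)\,,\,\L_1\SS\,]$.
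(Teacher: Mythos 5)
Your overall strategy is the same as the paper's: factor the unit as $\L_1\SS\to F\to\KU_{(p)}$, use \eqref{eq:fibration_sequence1} and rational acyclicity of $\B G$ to pass from $\L_1\SS$ to $F$, and then use \eqref{eq:fibration_sequence2} to control the remaining kernel. The first half is fine (in fact the outcome is stronger than you claim: because $[\,\Sigma^\infty_+(\B G),\Sigma^{-1}\H\QQ\,]$ and $[\,\Sigma^\infty_+(\B G),\Sigma^{-2}\H\QQ\,]$ both vanish, the map $[\,\Sigma^\infty_+(\B G),\L_1\SS\,]\to[\,\Sigma^\infty_+(\B G),F\,]$ is an isomorphism, not merely injective).

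The gap is in the second half. From \eqref{eq:fibration_sequence2} the kernel of $[\,\Sigma^\infty_+(\B G),F\,]\to[\,\Sigma^\infty_+(\B G),\KU_{(p)}\,]$ is precisely the image of the boundary map out of $[\,\Sigma^\infty_+(\B G),\Sigma^{-1}\KU_{(p)}\,]\cong\KU^{-1}(\B G)_{(p)}$, and the point is that this group is already zero, by Atiyah's completion theorem \cite{Atiyah}: $\KU^n(\B G)$ vanishes for $n$ odd. You do not invoke this; instead you describe $\KU^{-1}_{(p)}(\B G)$ as ``a torsion group'' and propose to show that $\psi^q-1$ is ``close to an isomorphism'' on it. That characterization is wrong (the group is $0$, not merely torsion), and the proposed substitute argument does not close on its own: if $\KU^{-1}(\B G)$ were a nonzero torsion group there would be no a priori reason for $\psi^q-1$ to act injectively on it, so the ``torsion-by-a-rational-summand considerations'' are doing no work. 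The single missing ingredient is exactly the vanishing $\KU^{\mathrm{odd}}(\B G)=0$; with it your argument collapses to the paper's proof, and without it the second step is not established.
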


\begin{proof}
	The localization of the unit factors as~$\L_1\SS\to F\to\KU_{(p)}$ with maps as in~\eqref{eq:fibration_sequence1} and~\eqref{eq:fibration_sequence2}. 	Since~$\B G$ is rationally acyclic, it follows from the fibration sequence~\eqref{eq:fibration_sequence1} that we may replace~$\L_1\SS$ by~$F$. It then follows from the fibration sequence~\eqref{eq:fibration_sequence2} that the kernel of the map \eqref{eq:injective} is dominated by
\begin{equation*}
	[\,\Sigma^\infty_+(\B G)\,,\,\Sigma^{-1}\KU_{(p)}\,]=\KU^{-1}(\B G)_{(p)},
\end{equation*}
and by Atiyah's completion theorem~\cite{Atiyah},
\begin{equation*}
	\KU^n(\B G)\cong
	\begin{cases}
		\widehat \RU(G) & n\text{ even}\\
		0 & n\text{ odd,}
	\end{cases}
\end{equation*}
this group vanishes even before localization.
\end{proof}

\begin{remark}
	The preceeding result fails at~$p=2$ even for the trivial group~$G=e$, as the kernel of the map~$\pi_0\L_1\SS\to\pi_0\L_1\KU=\ZZ_{(2)}$ has order~$2$.
\end{remark}

\begin{remark}\label{rem:bivariant}
	For the extension of Atiyah's completion theorem to the bivariant case, see~\cite[\S 6]{Greenlees}. Note that it implies that~$[\,\Sigma^\infty_+(\B G)\,,\,\Sigma^n\KU\wedge\Sigma^\infty_+(\B H)\,]$ may well be non-zero if~$H\not=e$.
\end{remark}


\subsection{The main result of this section}

This is as follows.


\begin{theorem}\label{thm:E(1)}
	The ideal~$\J_1(G)$ lies in the kernel of the map
	\begin{equation*}
	\A(G)\longrightarrow[\,\Sigma^\infty_+(\B G)\,,\,\L_1\SS\,]
	\end{equation*}
	for all odd primes~$p$. 
\end{theorem}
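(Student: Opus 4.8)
The plan is to combine Proposition~\ref{prop:J_1}, which identifies $\J_1(G)$ with the kernel of the linearization map $\A(G)\to\RU(G)$, with Proposition~\ref{prop:injective}, which says that for odd primes the map $[\,\Sigma^\infty_+(\B G)\,,\,\L_1\SS\,]\to[\,\Sigma^\infty_+(\B G)\,,\,\L_1\KU\,]$ is injective. Because of the injectivity, an element of $\A(G)$ maps to zero in $[\,\Sigma^\infty_+(\B G)\,,\,\L_1\SS\,]$ as soon as it maps to zero in $[\,\Sigma^\infty_+(\B G)\,,\,\L_1\KU\,]$. So the whole theorem reduces to showing that $\J_1(G)$ lies in the kernel of the composite $\A(G)\to[\,\Sigma^\infty_+(\B G)\,,\,\L_1\KU\,]$.

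The next step is to identify this composite more explicitly. The map $\A(G)\to[\,\Sigma^\infty_+(\B G)\,,\,\KU\,]=\KU^0(\B G)$ obtained from the Segal map and the unit $\SS\to\KU$ is, by the description of the Segal map in Section~\ref{sec:BurnsideSegal} together with the compatibility of transfers in cohomotopy and in $K$-theory, exactly the composite of the linearization $\A(G)\to\RU(G)$ with the map $\RU(G)\to\KU^0(\B G)=\widehat{\RU}(G)$; the latter identification is Atiyah's completion theorem~\cite{Atiyah}, and the relevant map $\RU(G)\to\widehat{\RU}(G)$ is just completion at the augmentation ideal. Since $\KU$ is already $\E(1)$-local after $p$-localization (it is a model for $\L_1\KU=\KU_{(p)}$), the map $\A(G)\to[\,\Sigma^\infty_+(\B G)\,,\,\L_1\KU\,]$ is the $p$-localization of this composite. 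Now if $x\in\J_1(G)$, then $x$ maps to zero in $\RU(G)$ by Proposition~\ref{prop:J_1}, hence certainly to zero in the completion $\widehat{\RU}(G)$ and in its $p$-localization. Therefore $x$ lies in the kernel of $\A(G)\to[\,\Sigma^\infty_+(\B G)\,,\,\L_1\KU\,]$, and the injectivity from Proposition~\ref{prop:injective} finishes the argument.

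The step I expect to require the most care is the identification of the composite $\A(G)\xrightarrow{\alpha}[\,\Sigma^\infty_+(\B G)\,,\,\SS\,]\to\KU^0(\B G)$ with the completed linearization map. This is the point where one must check that the topologically defined Segal map (built from transfers and classifying maps of covers) agrees, after passing to $K$-theory, with the purely algebraic linearization followed by completion. The cleanest way to see this is to note that both are additive and multiplicative natural transformations, so it suffices to evaluate on the generating bisets of the Burnside category, i.e.\ on transitive $G$-sets $G/U$ coming from subgroups $U\leqslant G$; for such a set $G/U$ the Segal transfer associated with $\B U\to\B G$ followed by the unit $\SS\to\KU$ gives the $K$-theory transfer of the trivial bundle, which represents the class of the induced representation $\mathrm{Ind}_U^G\CC$ in $\RU(G)$, and this matches $\CC[G/U]$ under linearization. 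Alternatively one can simply quote the well-known fact that the solution of the Segal conjecture together with Atiyah's theorem makes the square relating $\A(G)$, $\RU(G)$, $\widehat{\A}(G)$, and $\widehat{\RU}(G)$ commute; I would present the short multiplicative/additive argument and reference this comparison. Everything else is formal, so the proof will be short.
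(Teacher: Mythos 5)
Your proposal is correct and follows exactly the paper's own argument: reduce via Proposition~\ref{prop:injective} to showing vanishing in $[\,\Sigma^\infty_+(\B G)\,,\,\L_1\KU\,]$, then use the commutative comparison square between Burnside rings, representation rings, and their completions together with Proposition~\ref{prop:J_1}. The only difference is that you spell out in more detail the (standard) verification that the Segal map followed by the unit $\SS\to\KU$ agrees with linearization plus Atiyah completion, which the paper records implicitly in its commutative diagram.
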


\begin{proof}
We can use the commutativity of the following diagram.
\begin{center}
  \mbox{ 
    \xymatrix{
	\A(G)\ar[r]\ar[d] & \widehat \A(G)\ar[r]^-\cong\ar[d]
	& [\,\Sigma^\infty_+(\B G)\,,\,\SS\,]\ar[r]\ar[d] & [\,\Sigma^\infty_+(\B G)\,,\,\L_1\SS\,]\ar[d]\\
	\RU(G)\ar[r] & \widehat \RU(G)\ar[r]^-\cong 
	& [\,\Sigma^\infty_+(\B G)\,,\,\KU\,]\ar[r] & [\,\Sigma^\infty_+(\B G)\,,\,\L_1\KU\,]
    } 
  }
\end{center}
Proposition~\ref{prop:J_1} says that~$\J_1(G)$ is the kernel of the left vertical map. This implies that it lies in the kernel of the map from the left upper corner to the right lower corner. As the map on the right is injective by the previous Proposition~\ref{prop:injective}, we are done.
\end{proof}

For~$p=2$, not only this proof breaks down as indicated: it turns out that the conclusion of Theorem~\ref{thm:E(1)} need not be true for this prime. We will discuss this in the following Section~\ref{sec:counterexample}.

\subsection{Localization with respect to Morava K-theory}

Let us briefly mention the effect of replacing the localization with respect to the Adams-Johnson-Wilson spectrum~$\E(1)$ to the slightly more manageable localization with respect to Morava K-theory~$\K(1)$. Of course, by Theorem~\ref{thm:E(1)}, the ideal~$\J_1(G)$ lies in the kernel of the map
	\begin{equation*}
	\A(G)\longrightarrow[\,\Sigma^\infty_+(\B G)\,,\,\L_{\K(1)}\SS\,]
	\end{equation*}
for all odd primes~$p$ as well. And the following result implies that these are, in fact, equivalent statements.

\begin{proposition}\label{prop:former_reduction}
	The map
	\begin{equation*}
  	[\,\Sigma^\infty_+(\B G)\,,\,\L_1\SS\,]\longrightarrow[\,\Sigma^\infty_+(\B G)\,,\,\L_{\K(1)}\SS\,]
	\end{equation*}
	is injective at all primes. 
\end{proposition}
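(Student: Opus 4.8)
The plan is to exploit the chromatic Hasse square recalled in Section~\ref{sec:chromatic} together with the fact that $\B G$ is rationally acyclic. Recall that for $n=1$ the square
\begin{center}
  \mbox{
    \xymatrix{
      \L_1 X\ar[r]\ar[d] & \L_{\K(1)}X\ar[d] \\
      \L_0 X\ar[r]&\L_0\L_{\K(1)}X
    }
  }
\end{center}
is a pullback for every spectrum $X$; I would apply it to $X=\SS$. Mapping $\Sigma^\infty_+(\B G)$ into this pullback square and using that a pullback of spectra induces a Mayer--Vietoris-type long exact sequence of $[\,\Sigma^\infty_+(\B G)\,,\,-\,]$-groups, the kernel of the map $[\,\Sigma^\infty_+(\B G)\,,\,\L_1\SS\,]\to[\,\Sigma^\infty_+(\B G)\,,\,\L_{\K(1)}\SS\,]$ is controlled by the contribution of the rational corners $\L_0\SS=\H\QQ$ and $\L_0\L_{\K(1)}\SS$.

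First I would observe that $\Sigma^\infty_+(\B G)$ is rationally trivial in positive degrees: $\H\QQ_*(\B G)\cong\QQ$ concentrated in degree $0$, equivalently $\widetilde{\Sigma^\infty_+(\B G)}$ is rationally acyclic, exactly as used in the proof of Proposition~\ref{prop:injective}. Consequently $[\,\Sigma^\infty_+(\B G)\,,\,\Sigma^k\H\QQ\,]$ vanishes for $k\neq 0$ and is $\QQ$ for $k=0$. I would next need the analogous vanishing for the spectrum $\L_0\L_{\K(1)}\SS$, i.e.\ that $[\,\Sigma^\infty_+(\B G)\,,\,\Sigma^k(\L_0\L_{\K(1)}\SS)\,]$ behaves like a rational (hence torsion-free) group with no room in the relevant degrees to produce a kernel. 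The key point is that $\L_0\L_{\K(1)}\SS$ is a rational spectrum — it is $\L_{\H\QQ}$-local — so its homotopy groups are $\QQ$-vector spaces, and again the reduced homology of $\B G$ with $\QQ$-coefficients vanishes, forcing $[\,\widetilde{\Sigma^\infty_+(\B G)}\,,\,\L_0\L_{\K(1)}\SS\,]=0$ in all degrees. Threading the relevant pieces through the Mayer--Vietoris sequence associated with the Hasse pullback then shows that the map on the basepoint-split-off summand $[\,\widetilde{\Sigma^\infty_+(\B G)}\,,\,\L_1\SS\,]\to[\,\widetilde{\Sigma^\infty_+(\B G)}\,,\,\L_{\K(1)}\SS\,]$ is injective, and on the split-off $\SS$-summand the map $\pi_0\L_1\SS\to\pi_0\L_{\K(1)}\SS$ is also injective (both are identified compatibly with the relevant completed/$p$-local integers, with no kernel). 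Assembling the split pieces gives the claim.

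The main obstacle I anticipate is bookkeeping the low-degree end of the Mayer--Vietoris sequence carefully: one must make sure that the potentially troublesome connecting homomorphism from $[\,\Sigma^\infty_+(\B G)\,,\,\L_0\L_{\K(1)}\SS\,]$ and the term $[\,\Sigma^\infty_+(\B G)\,,\,\L_0\SS\,]$ cannot contribute a nonzero class to $\ker\bigl([\,\Sigma^\infty_+(\B G)\,,\,\L_1\SS\,]\to[\,\Sigma^\infty_+(\B G)\,,\,\L_{\K(1)}\SS\,]\bigr)$. This reduces precisely to the rational acyclicity of $\B G$ together with the fact that the lower horizontal map $\L_0\SS\to\L_0\L_{\K(1)}\SS$ is, on $\pi_0$, the inclusion of $\QQ$ into a $\QQ$-algebra, hence injective, so the comparison of the two rational corners is injective on $[\,\Sigma^\infty_+(\B G)\,,\,-\,]$ and leaves no slack. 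Once that is pinned down the injectivity statement is immediate, and — unlike Proposition~\ref{prop:injective} — nothing here is sensitive to the prime, so the argument runs uniformly at all $p$, matching the stated generality.
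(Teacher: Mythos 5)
Your argument is correct and is essentially the paper's own proof: both run the Mayer--Vietoris sequence coming from the first chromatic Hasse square against $\Sigma^\infty_+(\B G)$, use the rational acyclicity of $\B G$ and the identifications $\L_0\SS\simeq\H\QQ$, $\L_0\L_{\K(1)}\SS\simeq\H\QQ_p\vee\Sigma^{-1}\H\QQ_p$ to kill the relevant groups, and conclude injectivity. Your extra attention to the $\L_0\SS$-summand in the Mayer--Vietoris target (observing that $\L_0\SS\to\L_0\L_{\K(1)}\SS$ is injective on $\pi_0$, so no slack leaks into the kernel of the $\L_{\K(1)}$-component alone) is a point the paper leaves implicit, and filling it in is appropriate.
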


\begin{proof}
	This map is part of the Mayer-Vietors sequence
	\begin{equation*}
		[\,\Sigma^\infty_+(\B G)\,,\,\L_0\L_{K(1)}\SS\,]\to
		[\,\Sigma^\infty_+(\B G)\,,\,\L_1\SS\,]\to
		[\,\Sigma^\infty_+(\B G)\,,\,\L_{\K(1)}\SS\,]\oplus[\,\Sigma^\infty_+(\B G)\,,\,\L_0\SS\,]
	\end{equation*}
	associated with the first chromatic Hasse square 
	\begin{center}
  \mbox{ 
    \xymatrix{
      \L_1 X\ar[r]\ar[d] & \L_{\K(1)}X\ar[d] \\
      \L_{0}X\ar[r]&\L_{0}\L_{\K(1)}X.
    } 
  }
\end{center}
	Since the are equivalences~$\L_0\SS\simeq\H\QQ$ and~$\L_0\L_{\K(1)}\SS\simeq\H\QQ_p\vee\Sigma^{-1}\H\QQ_p$, and~$\B G$ is rationally contractible, the kernel of the map in question is dominated by the trivial group, and the proposition follows.
\end{proof}

\begin{remark}
	It is easy to see by example that the map in Proposition~\ref{prop:former_reduction} need not be surjective. This happens even for the trivial group~$G=e$.
\end{remark}

\subsection{Some blind alleys and dead ends}

\begin{remark}
Let us discuss how one might try to extend the arguments of this section to the case~$p=2$. There is a fibration sequence similar to~\eqref{eq:fibration_sequence2}, but here it takes the form
\begin{equation}\label{eq:KO_fibration_sequence}
	F\longrightarrow \KO_{(2)}\longrightarrow \KO_{(2)}.
\end{equation}
Consequently, in the~$p=2$ case, one would have to replace complex K-theory with real~K-theory~$\KO$ throughout the argument. However, the group~$\KO^{-1}_{(2)}(\B G)$ need not be trivial: this does not even hold for the trivial group~\hbox{$G=e$}, since in that case we have~$\KO^{-1}_{(2)}=\pi_1\KO_{(2)}=\ZZ/2$. It is here where the previous argument breaks down, and in fact the statement is wrong in this case. A counterexample is not hard to find, and will be presented in the following section.
\end{remark}

\begin{remark}\label{rem:higher_n}
Similar problems arise if one attempts to generalize the previous arguments to the higher chromatic layers~$n\geqslant2$. In that case, it seems advisable to deal with the~$\K(n)$-local sphere rather than the~$\E(n)$-local sphere. For that, there is a fibration sequence
\begin{equation*}
	\L_{\K(n)}\SS\longrightarrow\E_n^{\mathrm{h}K}\longrightarrow\E_n^{\mathrm{h}K}
\end{equation*}
for a certain closed subgroup~$K$ of the Morava stabilizer group~$\mathbb{G}_n$ which acts on the Lubin-Tate spectrum~$\E_n$ such that there is an equivalence~$\L_{\K(n)}\SS\simeq \E_n^{\mathrm{h}\mathbb{G}_n}$, see~\cite[Proposition 8.1]{Devinatz+Hopkins}, and one may wonder whether the composition
\begin{equation*}
	\A(G)\to
	[\,\Sigma^\infty_+(\B G)\,,\,\SS\,]\to
	[\,\Sigma^\infty_+(\B G)\,,\,\L_{\K(n)}\SS\,]\to
	[\,\Sigma^\infty_+(\B G)\,,\,\E_n^{\mathrm{h}K}\,]
\end{equation*}
has the properties which were used crucially in this section: it has to kill~$\J_n(G)$ and it must be injective. One way to approach both of these is the generalized character theory of Hopkins, Kuhn, and Ravenel mentioned in the introduction. This is a means to detect elements, for example in~$[\,\Sigma^\infty_+(\B G)\,,\,\E_n\,]=\E_n^0(\B G)$. 


\begin{proposition}
	The generalized characters with respect to $\E_n$-theory of the elements in the ideal~$\J_n(G)$ vanish.
\end{proposition}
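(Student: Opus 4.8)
The plan is to use the generalized character map of Hopkins--Kuhn--Ravenel, which expresses $\E_n^0(\B G)$, after inverting $p$ and suitably enlarging the coefficients, in terms of class functions on the set of $n$-tuples of commuting $p$-power-order elements of $G$. More precisely, recall that HKR produce a ring $L(\E_n^0)$, a faithfully flat $\E_n^0$-algebra, together with an isomorphism
\begin{equation*}
	L(\E_n^0)\otimes_{\E_n^0}\E_n^0(\B G)\;\cong\;\mathrm{Cl}_n(G,L(\E_n^0)),
\end{equation*}
the target being the ring of $L(\E_n^0)$-valued functions on conjugacy classes of continuous homomorphisms $\ZZ_p^n\to G$, equivalently on $G$-conjugacy classes of commuting $n$-tuples of elements of $p$-power order. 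The value of the generalized character of a class $S\in\A(G)$ at a tuple $(g_1,\dots,g_n)$ is, by the HKR computation for $\E_n^0(\B G)$ applied to permutation modules (or equivalently by naturality along $B\langle g_1,\dots,g_n\rangle\to \B G$ together with the elementary abelian/abelian case), a $\ZZ$-linear combination of the values $\Phi^U(S)$ where $U$ ranges over the subgroups of $\langle g_1,\dots,g_n\rangle$; in fact the character value is essentially $\Phi^{\langle g_1,\dots,g_n\rangle}(S)$ up to contributions indexed by proper subgroups.

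With this in hand the argument is short. First I would reduce to the case $G=\langle g_1,\dots,g_n\rangle$ by naturality of the character map along the inclusion of the subgroup generated by the tuple: the character of $S$ at $(g_1,\dots,g_n)$ only depends on the restriction of $S$ to this subgroup, and $\Phi^U(S)$ for $U\subseteq\langle g_1,\dots,g_n\rangle$ is unchanged under restriction. Second, observe that $\langle g_1,\dots,g_n\rangle$ is generated by $n$ elements, so if $S\in\J_n(G)$ then by definition $\Phi^U(S)=0$ for \emph{every} subgroup $U$ of $\langle g_1,\dots,g_n\rangle$ — each such $U$ is generated by at most $n$ elements. Plugging these vanishings into the formula for the character value shows the generalized character of $S$ vanishes at every tuple, hence vanishes identically; since $L(\E_n^0)$ is faithfully flat over $\E_n^0$, the image of $S$ in $\E_n^0(\B G)$ is zero.

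The main obstacle is making precise the claim that the generalized character value at a commuting $n$-tuple is a combination of $U$-marks for subgroups $U$ of the group generated by that tuple. This is where one has to invoke the HKR description of $\E_n^0(\B A)$ for $A$ a finite abelian $p$-group and the behaviour of the character map on the permutation module $\CC S$: one expands $\CC S|_A$ into transitive permutation modules $\CC[A/U]$ and computes the character of each $B$-line-bundle summand of $\E_n^0(\B A)$ appearing in $\E_n^0(\B[A/U])$, which reduces to counting fixed points, i.e. to $\Phi^U$. I would carry this out only for abelian $p$-groups, since the general case follows by restriction; the bookkeeping is standard but is the one genuinely technical point. Everything else is immediate from the definition of $\J_n(G)$ and faithful flatness.
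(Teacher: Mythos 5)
Your core argument is in the same spirit as the paper's, but it is less economical and contains one genuine error in the concluding sentence.

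The paper cites HKR's Example~6.16 directly, which gives that the generalized character $\chi_S(\alpha)$ at a morphism~$\alpha\colon\ZZ_p^n\to G$ is \emph{exactly} the number of fixed points $\Phi^{\mathrm{im}(\alpha)}(S)$ --- not merely ``a $\ZZ$-linear combination of $U$-marks with contributions indexed by proper subgroups'' as you hedge. Since the image of~$\alpha$ is a finite quotient of~$\ZZ_p^n$, it is generated by at most~$n$ elements, and the result is immediate. The bookkeeping you propose (decomposing the restricted permutation module over an abelian $p$-group into transitive pieces and computing characters of line-bundle summands) would produce a correct formula, but is unnecessary overhead once one has the clean statement from HKR. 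Your observation that every subgroup of $\langle g_1,\dots,g_n\rangle$ is generated by at most~$n$ elements is true for finite abelian~$p$-groups (elementary divisors), but the paper's route does not need it.

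The genuine error is the final inference: the HKR ring~$L(\E_n^0)$ is \emph{not} faithfully flat over~$\E_n^0$. It is a faithfully flat algebra over~$p^{-1}\E_n^0$, and inverting~$p$ destroys faithful flatness over~$\E_n^0$ itself. Consequently the vanishing of the generalized character does not imply that~$S$ maps to zero in~$\E_n^0(\B G)$, and that conclusion is in fact false in general. The paper makes exactly this point in the paragraph following the proposition: for the ``bad groups'' of Kriz and Kriz--Lee, the generalized character map on $\E_n^0(\B G)$ fails to be injective, which is precisely what blocks the extension to higher chromatic layers. The proposition asks only for the vanishing of the characters, so your proof should stop before the last clause.
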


\begin{proof}
	Let $S$ be an element in the ideal~$\J_n(G)$. Recall from~\cite{HKR2} that its generalized character, say~$\chi_S$, can be evaluated on $n$-tuples of commuting elements of $G$, each of which has order a power of~$p$. Or, equivalently, on the morphisms~$\alpha\colon\ZZ_p^n\to G$ of groups. By~\cite[Example 6.16]{HKR2}, the value $\chi_S(\alpha)$ is the number of fixed points in $S$ of the image of $\alpha$. Since the image is a finite quotient of~$\ZZ_p^n$, it is clearly generated by at most~$n$ elements, so that~$S\in\J_n(G)$ implies that~$\chi_S(\alpha)$ is zero for all~$\alpha$.
\end{proof}

Unfortunately, the vanishing of the generalized characters of the elements in the ideal~$\J_n(G)$ does not imply that these elements are zero in~$\E_n^0(\B G)$. This is known to fail for the so-called bad groups, where the Morava K-theory and the~E-theories are not concentrated in even degrees or may contain~$p$-torsion. See~\cite{Kriz} and~\cite{Kriz+Lee} for examples of such groups. This implies that the generalized character map on~$\E_n^0(\B G)$ is not injective for these groups, and it also prevents us from generalizing our proof in order to show that the homomorphism
\begin{equation*}
	[\,\Sigma^\infty_+(\B G)\,,\,\L_{\K(n)}\SS\,]\to
	[\,\Sigma^\infty_+(\B G)\,,\,\E_n^{\mathrm{h}K}\,]
\end{equation*}
were injective, the analog of Proposition~\ref{prop:injective}. Thus, it will not be easy to generalize the present results to higher chromatic layers.
\end{remark}


\section{Examples and counterexamples}\label{sec:counterexample}

This section contains a couple of examples and counterexamples in order to illustrate the complexity of the subject matter.

\begin{example} As Conjecture~\ref{conj:Ravenel} is trivially true for cyclic groups~$G$, where the Ravenel ideal~$\J_n(G)$ is trivial for all~$n\geqslant1$, we consider the Klein group~$V_4$ with 4 elements all of which are~$2$-torsion.~The Segal conjecture for this particular group has been discussed in~\cite{Davis}.
 
It is easy to see that the ideal~$\J_1(V_4)$ is a free abelian group on one generator. In order to describe a generator, consider the isomorphism class of the $V_4$-set
\begin{equation*}
	\sum_{C\leqslant V_4} V_4/C,
\end{equation*}
where~$C$ runs through the three proper cyclic subgroups of~$V_4$. The associated permutation representation decomposes as a sum of a~$3$-dimensional fixed part and the three different non-trivial~$1$-dimensional~$V_4$-representations. It is therefore isomorphic to the regular~$V_4$-representation plus two fixed lines, which is also the representation associated with the~$V_4$-set
\begin{equation*}
	V_4/e+2V_4/V_4.
\end{equation*}
The difference
\begin{equation*}
	g=\sum_{C\leqslant V_4} [V_4/C]-([V_4/e]+2),
\end{equation*}
generates the kernel of the linearization map, and we will now see that this element gives rise to an essential map
\begin{equation*}
	\Sigma^\infty_+(\B V_4)\longrightarrow\SS\longrightarrow\L_1\SS
\end{equation*} 
at the prime~$2$, so that~$g$ is not in~$\I_1(V_4)$. The calculation presented here is based on~\cite[Section 4]{Laitinen}, which we adapt to the present context. Therefore, the presentation will be brief. 

To start with, let us note that it will be equivalent to show that~$1$ and~\hbox{$1+g$} give rise to different maps. This is slightly more convenient, because the latter element can be written as a product,
\begin{equation}\label{eq:1+g_factorization}
	1+g=\prod_{C\leqslant V_4}([V_4/C]-[V_4/V_4]),
\end{equation}
and each of the factors~$[V_4/C]-[V_4/V_4]$ is the restriction of the transfer element from the group of order two.

It would be easiest if the induced maps could be distinguished in a homology theory. But, since the value of any homology theory on the sphere spectrum is relatively small, it is advisable to pass from spectra to their underlying infinite loop spaces first. On the level of spaces, our candidate~$c=1+g$ becomes a map~$\B V_4\to\Omega^\infty(\SS)$ which we will denote by~$c$ as well. In fact, since~$1+g$ has virtual cardinality~$1+0=1$, the image of~$c$ lies in the component~$\SL_1(\SS)\subseteq\Omega^\infty(\SS)$ of the multiplicative unit. In order to prove that~$g$ is a counterexample to the case~$n=1$ of Ravenel's conjecture, it now suffices to show that the homomorphism induced in a homology theory by the composition
\begin{equation}\label{eq:compositon}
	\B V_4\overset{c}{\longrightarrow}\Omega^\infty(\SS)\overset{e}{\longrightarrow}\Omega^\infty(\L_1\SS)
\end{equation}
with the map~\hbox{$e\colon\Omega^\infty(\SS)\to\Omega^\infty(\L_1\SS)$} induced by the unit
does not agree with the homomorphism induced by the map obtained from the Segal construction on~$1$. Since the latter is trivial in the sense that it induces the zero homomorphism in any reduced homology theory, because it factors over the classifying space of the trivial group, it suffices to show that the composition~\eqref{eq:compositon} induces a non-trivial homomorphism in this sense. 

For the map~$c\colon\B V_4\to\Omega^\infty(\SS)$, this is an elementary finger exercise in mod~$2$ homology, see~\cite[Proposition 4.14]{Laitinen}, but unoriented bordism theory works just as well. This would immediately imply that the composition~$ec$ is non-trivial as well, if only the unit were injective in homology; but it is not. To explain a suitable workaround, let us first discuss the infinite loop space~$\Omega^\infty(\L_1\SS)$ underlying the localization~$\L_1\SS$ of the sphere spectrum~$\SS$ and its relation to classical bundle theory. 

\begin{digression} 
	All the components of the space~$\Omega^\infty(\L_1\SS)$ are equivalent to the space~\hbox{$\JO=\JO(3)$}, which is one of the various spaces classically associated with the image of the~$\J$-homomorphism at the prime~$2$. Recall, for example from the comprehensive discussion in~\cite{Fiedorowicz+Priddy}, that the homotopy fiber~$\overline{\JO}$ of~$\psi^3-\operatorname{id}$ on the~$2$-localization~or~$2$-completion of~$\BO$ has two equivalent components, and these define the space~$\JO$. Its fundamental group is an elementary abelian~$2$-group of rank~$2$, and its universal cover is denoted by~$\widetilde{\J}$: this is also the universal cover of the spaces~$\J$ and~$\JSO$, both of which have fundamental group of order~$2$. The~$2$-connected cover of~$\JO$ is usually denoted by~$\JSpin$. All these spaces come with infinite loop space structures, but those need not concern us here; all that we will use is the identification~$\SL_1(\L_1\SS)\simeq\JO$. This allows us to bring Whitehead's~$j$-map in the form~$j\colon\SO\to\SL_1(\SS)$ into play. 
\end{digression} 

As has already been mentioned, the unit map~$e\colon\Omega^\infty(\SS)\to\Omega^\infty(\L_1\SS)$ is not injective in mod~$2$ homology, not even if we restrict it to suitable components of the spaces, such as~\hbox{$e\colon\SL_1(\SS)\to\SL_1(\L_1\SS)$}. However, there is a substitute: the defining fibration of~$\JO$ loops once to give a fibration
\begin{equation*}
	\SO\longrightarrow \JO\longrightarrow \BO
\end{equation*}
for which the Serre spectral sequences collapses mod~$2$, see~\cite{Clough} and~\cite[Proposition 3.1]{Fiedorowicz+Priddy}. In particular, the inclusion~$d\colon\SO\to \JO=\SL_1(\L_1\SS)$ of the fiber induces an injection in homology. This is our substitute.

The map~$d$ factors over the~$j$-map,
\begin{equation*}
	d\colon\SO\overset{j}{\longrightarrow}\SL_1(\SS)\overset{e}{\longrightarrow}\SL_1(\L_1\SS)=\JO,
\end{equation*}
and so does the transfer~$t\colon\B C\to\SL_1(\SS)\subseteq\Omega^\infty(\SS)$, which is the infinite loop map associated with the Segal map of the transfer element~$[C/e]-[C/C]$ for~$C$:
\begin{equation*}
	t\colon\B C\overset{r}{\longrightarrow}\SO\overset{j}{\longrightarrow}\SL_1(\SS),
\end{equation*}
where the map~$r$ is the reflection map. See~\cite{Brown} and~\cite[Theorem 5.1.b]{Mann+Miller+Miller}. The diagram
\begin{center}
  \mbox{ 
    \xymatrix{
      \B C\ar[r]^-t\ar[d]_-r& \SL_1(\SS)\ar[d]^-e\\
      \SO\ar[r]_-d\ar[ur]|-j & \SL_1(\L_1\SS)
    } 
  }
\end{center}
summarizes the situation. The following diagram chase finishes our discussion of this example. Since~$c$ is the product of restrictions of the transfer along the three projections~$V_4\to C$, it factors as~$c=jb$ for some map~$b\colon \B V_4\to\SO$. Since~$d=ej$ is injective in homology: if~$ec=ejb=db$ were trivial in homology, so were~$b$, and then so were~$c=jb$ as well.
\end{example}


The Klein group $V_4$ with four elements is clearly not the only group for which the conjecture fails. We may use the preceding counterexample to produce some more. For example, the conjecture fails for all elementary abelian~$2$-groups which are not cyclic, and more generally for all groups~$G$ which have~$V_4$ as a retract. The following example is not of this form.

\begin{example}
	The alternating group~$A_4$ on four symbols has twelve elements which fall into four conjugacy classes. There are five conjugacy classes of subgroups in~$A_4$, determined by the orders of their representatives:~$e$,~$C_2$,~$C_3$ of order three, the Klein group~$V_4$, and~$A_4$ itself. For example, the isomorphism class~$[A_4/C_3]$ is represented by the defining~$A_4$-set with four symbols on which the elements of~$A_4$ act as even permutations. An easy computation shows that~$\J_1(A_4)$, the common kernel of~$\Phi^e$,~$\Phi^{C_2}$, and~$\Phi^{C_3}$, which as an abelian group is free of rank~2, is generated by the two elements
\begin{equation}
	[A_4/e]-3[A_4/C_3]-[A_4/V_4]+3
\end{equation}
and
\begin{equation}\label{eq:Acandidate}
	[A_4/C_2]-[A_4/C_3]-[A_4/V_4]+1.
\end{equation}
The restriction of the second element~\eqref{eq:Acandidate} from~$A_4$ to~$V_4$ is the element~$g$ considered above. As this has already been shown to give rise to an essential map, so does~\eqref{eq:Acandidate}. Therefore, Conjecture~\ref{conj:Ravenel} fails for~$A_4$ as well.
\end{example}


\begin{example}
	The restriction of the element~$g$ along the (essentially unique) projection~$q\colon D_8\to V_4$ from the dihedral group~$D_8$ of order~$8$ to~$V_4$ does behave according to Conjecture~\ref{conj:Ravenel}: the composition
\begin{equation}\label{eq:D8_compositon}
		\Sigma^\infty_+(\B D_8)\longrightarrow
		\Sigma^\infty_+(\B V_4)\longrightarrow
		\SS\longrightarrow\L_1\SS
\end{equation}
	is zero at all primes. (Of course, the only prime of interest here is~$p=2$.) One way to see this is as follows. Note that
\begin{equation*}
		q^*g=[D_8/W_4]+[D_8/W_4']+[D_8/C_4]-[D_8/C_2]-2,
\end{equation*}
where~$W_4$,~$W_4'$, and~$C_4$ are the subgroups of~$D_8$ of order~$4$, and~$C_2$ is the normal subgroup of order~$2$. The image of this virtual~$D_8$-set under the Segal map extends over~$\Sigma^\infty_+(\BU)$, where~$\BU$ is the classifying space of the stable unitary group. Compare~\cite[Proposition~4]{Feshbach}. In order to show that the composition~\eqref{eq:D8_compositon} is zero, one may again replace~$\B V_4$ from~\eqref{eq:D8_compositon} by~$\BU$, pass to infinite loop spaces, and show that the composition
\begin{equation*}
		\B D_8\longrightarrow
		\BU\longrightarrow
		\Omega^\infty(\SS)\longrightarrow\Omega^\infty(\L_1\SS)
\end{equation*}
is null. To show this, it suffices to observe that all maps~$\BU\to\Omega^\infty(\L_1\SS)$ are zero. This is so, because~$\BU$ is simply-connected, so that any such map factors thought a connective cover of~$\Omega^\infty(\L_1\SS)$, which again is one of the various spaces classically associated with the image of the~$\J$-homomorphism at the prime~$2$. By~\cite[Lemma 2]{Feshbach}, there are no essential maps from~$\BU$ to these.
\end{example}



\vfill

\parbox{\linewidth}{%
Markus Szymik\\
Department of Mathematical Sciences\\
NTNU Norwegian University of Science and Technology\\
7491 Trondheim\\
NORWAY\\
\href{mailto:markus.szymik@ntnu.no}{markus.szymik@ntnu.no}\\
\href{https://folk.ntnu.no/markussz}{folk.ntnu.no/markussz}}

\end{document}